\documentclass[11pt]{amsart}

\usepackage[T1]{fontenc}
\usepackage[utf8]{inputenc}
\usepackage[english]{babel}
\usepackage{amsmath,amssymb,amsthm} 
\usepackage{enumitem}
\usepackage{color}
\usepackage[margin=1in]{geometry}
\usepackage{pgfplots}
\usepackage{hyperref}
\usepackage{mathtools}
\usepgfplotslibrary{fillbetween}
\usepackage{float}
\usepackage{hyperref}

\pgfplotsset{compat=1.18}

\newtheorem{theorem}{Theorem}[section]
\newtheorem{lemma}[theorem]{Lemma}
\newtheorem{proposition}[theorem]{Proposition}

\theoremstyle{definition}

\newtheorem{remark}[theorem]{Remark}

\newtheorem*{remark*}{Remark}

\renewcommand{\Re}{\operatorname{Re}}

\newcommand{\diag}{\operatorname{diag}}

\newcommand{\superrho}{{C}_\rho^{(n)}}
\newcommand{\sbd}{c_{\rho}}
\newcommand{\mbd}{b_{\rho,K}}

\renewcommand{\MR}[1]{}

\title{Complete functional calculus bounds for $\rho$-contractions}
\author[M. Hartz]{Michael Hartz}
\author[J. de Vries]{Jens de Vries}
\address{Fachrichtung Mathematik, Universit\"at des Saarlandes, 66123 Saarbr\"ucken, Germany}
\email{hartz@math.uni-sb.de}
\email{jens\_de\_vries@math.uni-sb.de}
\date{\today}

\subjclass[2020]{Primary 47A60; Secondary 47A20, 47A25, 47A30, 47A63}
\keywords{$\rho$-contraction, unitary dilation, functional calculus, completely bounded norm}

\begin{document}

\begin{abstract}
    Let $T$ be a $\rho$-contraction on a Hilbert space. We establish sharp bounds for the functional calculus of $T$ for matrix-valued analytic functions on the unit disc. These are complete versions of bounds established by Drury and by Schwenninger and the second author.
\end{abstract}

\maketitle

\section{Introduction}

Let $\rho > 0$.
An operator $T$ on a Hilbert space $\mathcal{H}$ is said to be of class $C_\rho$ if there exists a unitary operator $U$ on a Hilbert space $\mathcal{K}$ containing $\mathcal{H}$ such that
\begin{equation*}
  T^n = \rho P_{\mathcal{H}} U^n|_{\mathcal{H}} \qquad \text{for all } n \ge 1.
\end{equation*}
Operators of class $C_\rho$ can be characterized intrinsically: $T \in C_\rho$ if and only if
\begin{equation}
  \label{eq:rho_contraction_characterization}
  I - 2 \Big( 1 - \frac{1}{\rho} \Big) \Re(\overline{z} T) - \Big(\frac{2}{\rho} - 1 \Big) |z|^2 T^* T \ge 0
  \qquad \text{ for all }z \in \overline{\mathbb{D}}.
\end{equation}
This characterization, along with other basic facts about the class $C_\rho$, can be found in \cite[Section I.11]{SFB+10}.
In particular, if $\rho = 1$, then $C_\rho$ is the class of all operators of norm at most $1$,
and if $\rho = 2$, then $C_\rho$ is the class of all operators with numerical radius at most $1$.

Let $T \in C_\rho$ and temporarily assume $\rho \ge 1$.
A theorem of Okubo and Ando \cite{OA75} shows that the closed unit disc $\overline{\mathbb{D}}$ is a $\rho$-spectral set for $T$, meaning that for every function $f \in \mathcal{O}(\overline{\mathbb{D}})$, the algebra of functions holomorphic in a neighborhood of $\overline{\mathbb{D}}$, we have
\begin{equation}
  \label{eq:rho_spectral_set}
  \|f(T)\| \le \rho \sup_{z \in \overline{\mathbb{D}}} |f(z)|.
\end{equation}
The constant $\rho$ is optimal, as the example of
\begin{equation*}
  T = \begin{bmatrix}
    0 & \rho \\ 0 & 0
  \end{bmatrix}
\end{equation*}
and $f(z) = z$ shows. Nonetheless, the bound in \eqref{eq:rho_spectral_set} can be refined if one takes into account the value of $f$ at $0$. More precisely, Schwenninger and the second author \cite{SV25} showed that for every
$f \in \mathcal{O}(\overline{\mathbb{D}})$ with $\|f\|_\infty \le 1$, we have
\begin{equation}
  \label{eq:refined_bound_scalar_case}
  \|f(T)\| \le \sbd(|f(0)|^2),
\end{equation}
where $\sbd\colon [0,1] \to [1,\rho]$ is an explicit continuous decreasing function satisfying $\sbd(0) = \rho$ and $\sbd(1) = 1$, namely
\begin{equation*}
  \sbd(t) = \frac{\rho}{2}(1-t) + \sqrt{\frac{\rho^2}{4}(1-t)^2 + t}.
\end{equation*}
In the case $\rho = 2$, this result was previously obtained by Drury \cite{Drury08}.
If $\rho = 1$, then $\sbd(t) = 1$ for all $t \in [0,1]$, so the bound in \eqref{eq:refined_bound_scalar_case} reduces to the von Neumann inequality.

Okubo and Ando \cite{OA75} in fact proved something stronger, namely that there exists an invertible operator
$S$ such that $\|S\| \|S^{-1}\| \le \rho$ and $\|S^{-1} T S\| \le 1$.
This implies that
$\overline{\mathbb{D}}$ is a complete $\rho$-spectral set for $T$, meaning that for every matrix-valued analytic function $F \in M_n(\mathcal{O}(\overline{\mathbb{D}}))$ we have
\begin{equation}
  \label{eq:complete_rho_spectral_set}
  \|F(T)\| \le \rho \sup_{z \in \overline{\mathbb{D}}} \|F(z)\|_{M_n(\mathbb{C})}.
\end{equation}
It was recently observed by the first author and M\textsuperscript{c}Carthy \cite{HM26} that the bound in \eqref{eq:complete_rho_spectral_set} can be established by a direct argument based on prior work of Clou\^atre, Ostermann and Ransford \cite{COR23}. The existence of the similarity $S$ then follows from Paulsen's similarity theorem \cite{Paulsen84}, thus giving an alternative proof of the Okubo--Ando theorem.

The goal of this note is to establish a matrix analogue of the refined bound in \eqref{eq:refined_bound_scalar_case}, i.e.\ to give upper bounds on $\|F(T)\|$ depending only on the matrix $F(0)$.
Given a matrix $A$, we will write
\begin{equation*}
  m(A)= \inf_{\|x\|=1} \|A x\| = \lambda_{\min}(A^* A)^{1/2}
\end{equation*}
for the smallest singular value of $A$.

\begin{theorem}
  \label{thm:main_intro}
  Let $\rho > 0$ and let $T \in C_\rho$. Let $F \in M_n(\mathcal{O}(\overline{\mathbb{D}}))$ with $\|F\|_\infty \le 1$. Then
  \begin{equation}\label{eq:main_intro}
    \|F(T)\| \le
    \begin{cases}
      \sbd(\|F(0)\|^2) & \text{if } \rho \le 1,\\
      \sbd(m(F(0))^2) & \text{if } \rho > 1.
    \end{cases}
  \end{equation}
\end{theorem}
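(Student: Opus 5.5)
Via the dilation $T^k = \rho P_{\mathcal H} U^k|_{\mathcal H}$ for $k \ge 1$, I would express $F(T)$ in terms of the unitary $U$ and then reduce everything to a $2\times 2$ operator-matrix norm computation.

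\textbf{Step 1: representation.} For a polynomial $F(z) = \sum_k A_k z^k$ with $A_k \in M_n$ (the general case follows by approximation with $F(rz)$, $r \uparrow 1$), we have $F(T) = A_0\otimes I + \rho\, P(\sum_{k\ge1} A_k\otimes U^k)|$. Writing $G(U) = F(U) - F(0)$, this gives
\begin{equation*}
  F(T) = (1-\rho) F(0)\otimes I_{\mathcal H} + \rho\, P_{\mathcal H^n} F(U)|_{\mathcal H^n}.
\end{equation*}
Since $U$ is unitary, the spectral theorem and von Neumann's inequality give $\|F(U)\| \le \|F\|_\infty \le 1$. So $F(T) = (1-\rho)A + \rho P W|$, where $A = F(0)$ is a matrix (ampliated to $\mathcal H$) and $W$ is a contraction. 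This crude estimate alone only gives $|1-\rho|\|A\| + \rho$, so I must also use that $A$ is the compression of $W$ in a suitable sense.

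\textbf{Step 2: exploit $A = $ ``mean'' of $W$.} By Schwarz--Pick for matrix functions, $F(z) = A + D_{A^*}\,\Phi(z)\,(I - A^*\Phi(z))^{-1}D_A$ with $\|\Phi\|_\infty \le 1$ and $\Phi(0)=0$. Here $D_A = (I-A^*A)^{1/2}$, and this requires $\|A\| < 1$; the case $\|A\| = 1$ is handled by a limiting argument. Equivalently, $F(U)$ is the compression of a unitary colligation. The cleaner route is via the $2\times 2$ unitary $\begin{bmatrix} A & D_{A^*}\\ D_A & -A^*\end{bmatrix}$: I would write
\begin{equation*}
  F(T) = A + \rho\, P\bigl(F(U)-A\bigr)|
\end{equation*}
and bound $\|F(U) - A\|$ in the form $D_{A^*} X D_A$ with $\|X\| \le 1$. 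Indeed, the Schwarz--Pick form gives $F(U) - A = D_{A^*} X D_A$ with $X = \Phi(U)(I - A^*\Phi(U))^{-1}$, but $X$ need not be a contraction. I would therefore instead use the Schur/Parrott structure. For every unit vector $h$,
\begin{equation*}
  \|F(T)h\| \le \|(1-\rho)Ah + \rho P F(U)h\|,
\end{equation*}
and then maximize over pairs $(a,w) = (Ah, F(U)h)$ subject to the joint constraint coming from $\begin{bmatrix} F(U) \\ \ldots\end{bmatrix}$. The key fact I would use is that the operator $\begin{bmatrix} A & \ast \\ \ast & \ast\end{bmatrix}$ relation reduces to the scalar problem of \cite{SV25}: the sup over contractions $W$ with a given ``zeroth Fourier coefficient'' $A$ of $\|(1-\rho)A + \rho W\|$.

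\textbf{Step 3: reduce to singular values.} Using singular value decomposition and unitary invariance, I reduce to $A$ diagonal with entries $s_j \in [0,1]$. The extremal quantity should then be $\max_j$ of the scalar bound with $|f(0)| = s_j$, i.e.\ $\sup_j \sbd(s_j^2)$. Since $\sbd$ is decreasing when $\rho>1$, the maximum is attained at the smallest singular value, which yields $\sbd(m(A)^2)$. When $\rho\le1$, $\sbd$ is increasing (here $\rho/2<1$ and $\sbd(0)=\rho\le1=\sbd(1)$), so the maximum is attained at $\|A\|$. The main obstacle is rigorously justifying this decoupling: the interaction between off-diagonal parts of $F(U)$ and the compression $P$ must not increase the norm beyond the worst diagonal channel. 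I would attack it by proving directly an operator inequality of the form
\begin{equation*}
  F(T)^*F(T) \le \sbd(m(A)^2)^2\, I
\end{equation*}
via the intrinsic characterization \eqref{eq:rho_contraction_characterization}, applied with the positivity kernel argument of \cite{COR23} as in \cite{HM26}. Concretely, I would write $\sbd^2 I - F(T)^*F(T)$ as a positive combination of the positive quantity in \eqref{eq:rho_contraction_characterization} integrated against $\Re$ of the Herglotz-type kernel $(I+F^*(\cdot))(\ldots)$, using $I - F(z)^*F(z) \ge 0$ and $A^*A \ge m(A)^2 I$ (respectively $\le \|A\|^2 I$) at the single place where the scalar proof uses $|f(0)|^2$.
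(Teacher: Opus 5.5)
Your Step 1 is correct ($F(T)=(1-\rho)F(0)+\rho P_{\mathcal H^n}F(U)|_{\mathcal H^n}$), and the monotonicity bookkeeping at the end of Step 3 gives the right final value. There is a genuine gap, though: the proposal never supplies the one thing that has to be proved, namely how $A=F(0)$ constrains $F(T)$.

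The ``scalar problem'' at the end of Step 2 is not well posed. Nothing in it ties the compression of $F(U)$ to $A$ beyond the identity you already have.

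The decoupling in Step 3 is not merely unproved but false at the level you would need it. A diagonal $F(0)$ does not make $F$ diagonal, and the singular-value channels genuinely interact. For $\rho>1$, take $T=\begin{bmatrix}0&\rho\\0&0\end{bmatrix}$, $A=\diag(s,0)$ with $0<s<1$, $G(z)=\begin{bmatrix}0&z\\z&0\end{bmatrix}$ and $F=M_A\circ G$. Then $F(0)=A$ and $\|F\|_\infty\le 1$, and the vector $\xi=(e_2,0)$ in the $s$-channel satisfies $\|F(T)\xi\|^2=\rho^2-(\rho^2-1)s^2>\sbd(s^2)^2$ (Proposition~\ref{prop:lower_bound}). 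So the $s$-channel is not controlled by the scalar bound for $s$. Only the global bound by the worst channel survives, here $\sbd(m(A)^2)=\rho$, and any proof must handle the interaction.

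Your last paragraph is a plan rather than an argument. Its mechanism, substituting $A^*A\ge m(A)^2 I$ for $|f(0)|^2$ at a single place in the scalar proof, does not work: the scalar certificate does not extend uniformly over $\sigma(A^*A)$.

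What is missing is the chain the paper uses.
\begin{enumerate}
\item Factor $F=M_A\circ G$ with $G(0)=0$.
\item Use the $\rho$-dilation of $T$ to show that $G(T)\in\superrho$. This means not just $G(T)\in C_\rho$, but $XG(T)Y\in C_\rho$ for all scalar matrix contractions $X,Y$. The paper stresses that this extra strength is vital: it lets the certificate use a matrix parameter adapted to $A$ instead of the scalar $z$ of \eqref{eq:rho_contraction_characterization}.
\item A pluriharmonicity and minimum-principle argument then gives $L_X(G(T))\ge 0$ for every matrix contraction $X$ (Lemma~\ref{lem:class_superrho}).
\item Rewrite $M_A(S)^*M_A(S)\le b(A^*A)$ as a quadratic inequality $Q(S)\ge 0$ (Lemma~\ref{lem:M_A_upper_bound}). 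Certify it as $Q(S)-\delta L_X(S)\ge 0$ with $X=U\,x(A^*A)$, where $A=U|A|$ is the polar decomposition. This reduces everything to positivity of a $2\times 2$ scalar matrix $W(t)$ for each $t\in\sigma(A^*A)$.
\item For $\rho>1$, put $K=\sbd(m(A)^2)$, $\delta=\rho K$ and $s_\dagger=\frac{(\rho-1)K}{\rho K-1}$. The multiplier must depend on the eigenvalue. The scalar-type choice $x(t)=-\sqrt{t}/s_\dagger$ works only for $t\le s_\dagger^2$; beyond that it leaves $[-1,1]$. A second certificate, $x=-1$, is needed there, and it produces the non-affine branch of $\mbd$.
\item The norm bound follows because $\mbd$ is decreasing with $\mbd(m(A)^2)=K^2$.
\end{enumerate}
Your proposal contains neither the matrix-parameter inequality for $G(T)$ nor an eigenvalue-dependent certificate, and without them the argument does not close.
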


Note that $\sbd$ is decreasing for $\rho \ge 1$ and increasing for $\rho \le 1$, which explains the appearance of the smallest singular value in the case $\rho \ge 1$ and the operator norm (i.e.\ the largest singular value) in the case $\rho \le 1$.

\begin{figure}[H]
\centering
\begin{tikzpicture}[declare function={
    gr(\x,\r)=\r/2*(1-\x)+sqrt(\r^2/4*(1-\x)^2+\x);
    }]
    \begin{axis}[
    axis lines = middle,
    axis on top = true,
    xlabel = {$t$},
    ylabel = {$\sbd(t)$},
    domain = 0:1,
    samples = 100,
    xmin = 0, xmax = 1.15,
    ymin = 0, ymax = 3.45,
    extra x ticks = {0},
    extra x tick labels = {$0$},
    extra x tick style = {grid = none},
    grid = both,
    grid style = {thin, dashed, gray!50},
    legend pos = north east,
    thick
    ]
    \addplot[blue, thick] {gr(x,0.5)};
    \addlegendentry{$\rho=\frac{1}{2}$}
    \addplot[red, thick] {gr(x,2)};
    \addlegendentry{$\rho=2$}
    \addplot[black, thick] {gr(x,3)};
    \addlegendentry{$\rho=3$}
    \end{axis}
\end{tikzpicture}
\caption{Graphs of $\sbd$ for $\rho=\frac{1}{2},2,3$.}
\end{figure}

We remark that in the case $\rho \ge 1$, Theorem \ref{thm:main_intro} gives (yet another) proof of the bound \eqref{eq:complete_rho_spectral_set}
of Okubo and Ando, and hence of their similarity result, thanks to Paulsen's similarity theorem.
Theorem \ref{thm:main_intro} is in fact a consequence of the following more precise result,
which provides an operator upper bound
on $F(T)^* F(T)$ that only depends on $F(0)$.

\begin{theorem}\label{thm:main_result}
    Let $\rho>0$ and let $T\in C_{\rho}$. Let $F\in M_n(\mathcal{O}(\overline{\mathbb{D}}))$ with $\|F\|_\infty\leq1$.
    Let $A=F(0)$.
    \begin{enumerate}[label=(\alph*)]
        \item If $0<\rho\leq1$, let $K=\sbd(\|A\|^2)$ and define the increasing affine function
          $\mbd\colon [0,1] \to [\rho^2,1]$ by
        \begin{equation*}
            \mbd(t)=\rho K-(\rho K-1)t.
        \end{equation*}
        Then
        \begin{equation*}
            F(T)^*F(T)\leq \mbd(A^*A).
        \end{equation*}
        \item If $\rho>1$, let $K=\sbd(m(A)^2)$
          and define the decreasing continuous function
          $\mbd\colon [0,1] \to [1, \rho^2]$ by
        \begin{equation*}
            \mbd(t)=\begin{cases}
                \rho K - (\rho K-1)t& \text{if } 0\leq t\leq \big(\frac{(\rho-1)K}{\rho K-1}\big)^2,\\
                1+\dfrac{(K-1)^2(1+\sqrt{t})}{2(K-1)-(\rho K-1)(1-\sqrt{t})}& \text{if } \big(\frac{(\rho-1)K}{\rho K-1}\big)^2<t \le 1.
            \end{cases}.
        \end{equation*}
        Then
        \begin{equation*}
            F(T)^*F(T)\leq \mbd(A^*A).
        \end{equation*}
    \end{enumerate}
\end{theorem}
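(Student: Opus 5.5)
We will reduce to a positivity statement about the unitary dilation. Write $T^k=\rho P_{\mathcal H}U^k|_{\mathcal H}$ for $k\ge1$, so that for $F(z)=\sum_k A_k z^k$ with $A_0=A$ we obtain
\[
F(T)=A\otimes I+\rho\bigl(P_{\mathcal H}(F(U)-A\otimes I)|_{\mathcal H}\bigr)=(1-\rho)A\otimes I+\rho P_{\mathcal H}F(U)|_{\mathcal H},
\]
where $F(U)$ is defined by the continuous functional calculus of the unitary $U$ (after first replacing $F$ by $F(r\cdot)$ and letting $r\to1$). Thus $F(T)$ is a compression of $G:=(1-\rho)A\otimes I+\rho F(U)$. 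By the spectral theorem for $U$, $F(U)$ lies in the commutative-coefficient $C^*$-algebra generated by $U$ tensored with $M_n$. For $h\in\mathbb C^n\otimes\mathcal H$ we have $\langle F(T)^*F(T)h,h\rangle=\|P GJh\|^2$, where $J$ is the inclusion. Bounding $\|P GJh\|^2\le\|GJh\|^2$ alone is too crude, since it ignores that $h\in\mathcal H$. So we will instead aim to exploit the structure $P F(U)|_{\mathcal H}$ together with the positivity defining $C_\rho$.

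Concretely, we plan to prove that, for every contraction-valued $F$ with $F(0)=A$, the operator inequality $F(T)^*F(T)\le \mbd(A^*A)$ follows from a pointwise (scalar-$z$) matrix inequality. By the Herglotz/Schur method of \cite{COR23,HM26}: write $\Phi=(I+F)(I-F)^{-1}$ or, better, use the Cayley-type representation $F(T)=(1-\rho)A+\rho\,P F(U)|$ and the positive semidefinite kernel $I-F(w)^*F(z)$. Using the realization $F(z)=A+zB(I-zD)^{-1}C$ with unitary colligation $\begin{bmatrix}A&B\\C&D\end{bmatrix}$, we get $F(T)$ as a compression of a colligation evaluated at $T$. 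The inequality $\mbd(A^*A)-F(T)^*F(T)\ge0$ is then equivalent to positivity of a $2\times2$ operator block matrix built from $A$, $B,C,D$ and $T$. By the unitary invariance in $A$ (replace $F$ by $VFW$ with unitaries), we may diagonalize $A$ via its singular value decomposition. We expect this to reduce, via the intrinsic characterization \eqref{eq:rho_contraction_characterization}, to checking for each singular value $s$ of $A$ a $2\times2$ scalar inequality, namely the one already underlying the scalar bound \eqref{eq:refined_bound_scalar_case} from \cite{SV25}, but with constant $K$ frozen at the extremal singular value.

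The key step is the scalar-parameter inequality: for fixed $K$ (chosen from $\|A\|$ if $\rho\le1$, from $m(A)$ if $\rho>1$), for every $s\in[0,1]$ with $K\ge\sbd(s^2)$, and every $T\in C_\rho$ and every scalar inner $\varphi$ with $\varphi(0)=s$, we need $|{(1-\rho)s+\rho\varphi(U)}|^2$ compressed to be at most $\mbd(s^2)$. For $\rho\le1$ this is the affine bound, which will follow from convexity: $\mbd$ is the chord of the concave-type function, and $\rho K\ge\ldots$ gives the estimate at $s=0$ ($\rho^2\le\rho K$) and at $s=1$. For $\rho>1$, the second branch of $\mbd$ is exactly the optimizer of a one-parameter family of quadratic bounds $G^*G\le \alpha+\beta|{\cdot}|^2$ obtained by minimizing over a free parameter $\lambda>0$ in $\|x+y\|^2\le(1+\lambda)\|x\|^2+(1+\lambda^{-1})\|y\|^2$. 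The threshold $\bigl(\tfrac{(\rho-1)K}{\rho K-1}\bigr)^2$ is where the optimal $\lambda$ hits its boundary. The main obstacle will be this optimization and verifying that a single choice, depending on $s$ but yielding an operator function of $A^*A$, works simultaneously for all singular directions. We will handle this by performing the decomposition on the singular subspaces of $A$, and checking that cross terms vanish because the bound $\mbd(A^*A)$ is a functional calculus of $A^*A$.
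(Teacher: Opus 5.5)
\textbf{There is a genuine gap.} It sits exactly at the step you postpone to the end: the reduction to the singular subspaces of $A$.

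Replacing $F$ by $VFW$ diagonalizes $F(0)=A$ but not $F$. For $z\neq0$ the matrix $F(z)$ mixes the singular subspaces of $A$, so $F(T)$ is not block diagonal with respect to them. The diagonal block of $F(T)^*F(T)$ belonging to one singular value is therefore not controlled by any scalar problem in that direction. The block diagonality of $\mbd(A^*A)$ does nothing to remove this interaction, so the theorem is not a direct sum of scalar inequalities, whatever constant $K$ you insert.

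The paper's lower-bound example makes this concrete. Take $T=\begin{bmatrix}0&\rho\\0&0\end{bmatrix}$, $G(z)=\begin{bmatrix}0&z\\z&0\end{bmatrix}$, $A=\diag(s,0)$ and $F=M_A\circ G$. The vector $\xi=(e_2,0)$ lies in the singular direction with value $s$, yet $\langle F(T)^*F(T)\xi,\xi\rangle=\rho^2-(\rho^2-1)s^2$. For $\rho>1$ and $0<s<1$ this strictly exceeds $\sbd(s^2)^2$, the largest value $\|\varphi(T)\|^2$ can take for a scalar $\varphi$ with $\|\varphi\|_\infty\le1$ and $\varphi(0)=s$. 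The excess comes entirely from the off-diagonal blocks $\sqrt{1-s^2}\,T$ of $F(T)$, i.e.\ from the coupling to the direction with singular value $0$. So ``freezing $K$ at the extremal singular value'' is a consequence of this coupling, and your plan contains no mechanism that produces it.

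There is a second, smaller problem. If ``compressed'' in your key step means $P_{\mathcal H}|(1-\rho)s+\rho\varphi(U)|^2|_{\mathcal H}$, that is the crude bound you had already discarded. For $\rho>1$ it fails already for $T=0$ with the bilateral shift as dilation: there it equals $\rho^2-(\rho^2-1)s^2>K^2=\mbd(s^2)$ for $0<s=m(A)<1$.

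\textbf{What is missing} is a way of using the $C_\rho$ hypothesis that is compatible with the matrix structure.

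In the paper, the role of your colligation is played by the factorization $F=M_A\circ G$ with $G(0)=0$ and $\|G\|_\infty\le1$. All dependence on $A$ sits in an explicit Potapov--M\"obius map. The only information used about $S=G(T)$ is that it lies in the class $\superrho$. This comes from a dilation argument like your first display, applied to powers of $G$ (where $G(0)=0$ removes the $(1-\rho)$-term), together with Sz.-Nagy dilation of $G(U)$. Membership in $\superrho$ gives $L_X(S)\ge0$ for every contraction $X\in M_n(\mathbb C)$ (Lemma~\ref{lem:class_superrho}). This is strictly more than the scalar-$z$ characterization \eqref{eq:rho_contraction_characterization} of $T$ that you intended to use.

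The argument then runs as follows.
\begin{enumerate}
\item $M_A(S)^*M_A(S)\le\mbd(A^*A)$ is rewritten as a quadratic inequality $Q(S)\ge0$ with scalar-matrix coefficients (Lemma~\ref{lem:M_A_upper_bound}).
\item One shows $Q(S)-\rho K\,L_X(S)\ge0$ for $X=U\,x(A^*A)$, where $A=U(A^*A)^{1/2}$.
\item With $R=U^*S$ this difference equals $\begin{bmatrix}I&R^*\end{bmatrix}W(A^*A)\begin{bmatrix}I\\R\end{bmatrix}$. So it suffices that the $2\times2$ scalar matrices $W(t)$, $t\in\sigma(A^*A)$, be positive.
\end{enumerate}
This is the correct form of your per-singular-value reduction. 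Only the coefficients are diagonalized, while $R$ remains an arbitrary operator on $\mathcal H^n$, so the coupling is handled automatically.

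Two features of this argument explain the shape of the bound. The multiplier $\delta=\rho K$ must be a single scalar serving all directions at once, so $K$ is forced to be determined by the extremal singular value. And $x(t)$ has to vary with $t$: $x(t)=-\frac{(\rho K-1)\sqrt t}{(\rho-1)K}$ on the affine branch and $x(t)=-1$ beyond the threshold. Hence a non-scalar $X$, and therefore $\superrho$ rather than $C_\rho$, is needed.

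The second branch of $\mbd$ is the solution of $\det W(t)=0$ with $x(t)=-1$. Your $\lambda$-splitting is never attached to specific vectors, so the claim that it reproduces this branch is unsupported.
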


At the most basic level, our strategy to prove Theorem \ref{thm:main_result} is similar to that in \cite{SV25}:
The case $F(0) = 0$ is easy to deal with using the unitary $\rho$-dilation of $T$ and to understand the general case it remains to study the action of suitable M\"obius transforms on operators of class $C_\rho$. However, the arguments in the complete setting differ substantially from those in the scalar case. Since we are dealing with matrix-valued functions, we need to work with
Potapov--Möbius transforms on the unit ball of $M_n(\mathbb{C})$; cf.\ \cite{HM26}.
It turns out that the appropriate class of operators on $\mathcal{H}^n$ is not $C_\rho$ itself, but a subclass of $C_\rho$ that is stable under multiplication by scalar matrices, which we call $\superrho$.
The main burden of the proof is then to analyze the action of Potapov--Möbius transforms on operators in $\superrho$.
Here, new phenomena arise that are not present in the scalar case, as can be seen from the statement of Theorem \ref{thm:main_result} and the piecewise definition of $\mbd$.

Finally, we address the sharpness of our bounds.
Sharpness of Theorem \ref{thm:main_intro} follows essentially from sharpness of the scalar bound
\eqref{eq:refined_bound_scalar_case} established in \cite{SV25}. More precisely,
we show that for every matrix $A\in M_{n}(\mathbb{C})$ with $\|A\| \le 1$, we can find an operator $T\in C_{\rho}$ and a function $F\in M_{n}(\mathcal{O}(\overline{\mathbb{D}}))$ with $\|F\|_{\infty}=1$ and $F(0)=A$ such that equality is attained in \eqref{eq:main_intro};
see Proposition \ref{prop:sharpness_main_intro}.
The question of sharpness of Theorem \ref{thm:main_result} is more subtle as the order on Hermitian matrices is not a lattice order; we will discuss this in Section \ref{sec:sharpness}. Nonetheless, we will show that the bound in Theorem \ref{thm:main_result} is optimal under some natural assumptions on the shape of the upper bound;
see Theorem \ref{thm:sharpness_main_result} for the precise statement.

\subsection*{Acknowledgement}
The authors are grateful to John M\textsuperscript{c}Carthy for helpful discussions.

\subsection*{AI disclosure}
The proofs of Lemma \ref{lem:PSD_reduction_full} and Proposition \ref{prop:main_result_K} were obtained with the help of ChatGPT, extending human generated arguments in special cases. Part of the construction in the proofs of Theorem \ref{thm:sharpness_main_result} and Lemma \ref{lem:weighted_shift_C_rho} are based on arguments provided by ChatGPT. 
We also used ChatGPT for preliminary computations, typesetting, generating figures and proofreading,
as well as GitHub Copilot for autocomplete.

\section{The class $\superrho$}

Let $F \in M_n(\mathcal{O}(\overline{\mathbb{D}}))$ be a matrix-valued holomorphic function
with $\|F\|_\infty \le 1$ and assume that $A = F(0)$ satisfies $\|A\| < 1$.
Using Potapov--M\"obius transforms $M_A$, which we will explain in detail in the next section, we can write $F = M_A \circ G$, where $G\colon \overline{\mathbb{D}} \to M_n(\mathbb{C})$ is a holomorphic function with $\|G\|_\infty \le 1$ and $G(0) = 0$.
As in the scalar case \cite{SV25}, we will analyze the actions of $G$ and of $M_A$ on operators of class $C_\rho$ separately.
In this section, we will deal with the former.

As usual, we identify $B(\mathcal{H}^n) = M_n(B(\mathcal{H})) = M_n(\mathbb{C}) \otimes B(\mathcal{H})$.
Moreover, if $X \in M_n(\mathbb{C})$ is a scalar matrix, we can think of $X$ as an operator on $\mathcal{H}^n$ by identifying $X$ with $X \otimes I_{\mathcal{H}}$.
We start with the following lemma.

\begin{lemma}
  \label{lem:vanishing_at_zero}
  Let $\rho > 0$.
  Let $S \in C_\rho$ and let $G \in M_n(\mathcal{O}(\overline{\mathbb{D}}))$ satisfy
  $\|G\|_\infty \le 1$ and $G(0) = 0$.
  Let $T = G(S) \in B(\mathcal{H}^n)$. Then for all scalar matrices $X,Y \in M_n(\mathbb{C})$ with $\|X\| \le 1$
  and $\|Y\| \le 1$, the operator $X T Y \in B(\mathcal{H}^n)$ is of class $C_\rho$.
\end{lemma}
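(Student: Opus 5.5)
We will verify the defining dilation property of $C_\rho$ directly for $XTY$, using a unitary $\rho$-dilation of $S$.

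\textbf{Step 1: a dilation of $G(S)$.} Since $S \in C_\rho$, there is a unitary $U$ on $\mathcal{K} \supseteq \mathcal{H}$ with $S^k = \rho P_{\mathcal{H}} U^k|_{\mathcal{H}}$ for all $k \ge 1$. We will show that for every $H \in M_n(\mathcal{O}(\overline{\mathbb{D}}))$ with $H(0) = 0$,
\begin{equation*}
  H(S) = \rho\, P_{\mathcal{H}^n} H(U)|_{\mathcal{H}^n}.
\end{equation*}
For matrix polynomials vanishing at $0$ this follows entrywise from the dilation identity. For general $H$ we pass to the limit using uniform convergence of the Taylor series on a neighborhood of $\overline{\mathbb{D}}$. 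The spectrum of $S$ lies in $\overline{\mathbb{D}}$, because $r(S) = \lim \|S^k\|^{1/k} \le \lim \rho^{1/k} = 1$, and the spectrum of $U$ lies in $\mathbb{T}$, so the holomorphic functional calculus is continuous for both.

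\textbf{Step 2: powers of $XTY$.} For $k \ge 1$ we have
\begin{equation*}
  (XTY)^k = X\,G(S)\,(YX)\,G(S)\,(YX)\cdots G(S)\,Y,
\end{equation*}
where the scalar matrices $X$ and $Y$ commute with $S$, since $S$ acts as $I_n\otimes S$. Hence $(XTY)^k = H_k(S)$ with
\begin{equation*}
  H_k(z) = X G(z) (YX) G(z) \cdots (YX) G(z) Y.
\end{equation*}
This $H_k$ is a matrix-valued holomorphic function on a neighborhood of $\overline{\mathbb{D}}$ with $H_k(0) = 0$, because $G(0) = 0$. To justify $(XTY)^k = H_k(S)$, we use that the functional calculus $F \mapsto F(S)$ is multiplicative on matrix-valued functions, meaning $(FG)(S) = F(S)G(S)$ for matrix products, and that constant matrices act as $X\otimes I$.

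\textbf{Step 3: the unitary $\rho$-dilation.} By Step 1,
\begin{equation*}
  (XTY)^k = \rho P_{\mathcal{H}^n} H_k(U)|_{\mathcal{H}^n}.
\end{equation*}
By multiplicativity of the functional calculus at the normal operator $U$, $H_k(U) = W^k$ with $W = X G(U) Y$. Since $U$ is unitary and $\|G\|_\infty \le 1$, we have $\|G(U)\| \le 1$, and therefore $\|W\| \le 1$. So
\begin{equation*}
  (XTY)^k = \rho P_{\mathcal{H}^n} W^k|_{\mathcal{H}^n} \quad \text{for all } k \ge 1,
\end{equation*}
with $W$ a contraction on $\mathcal{K}^n$. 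The contraction $W$ has a unitary dilation $V$ on some $\mathcal{L} \supseteq \mathcal{K}^n$ (Sz.-Nagy), satisfying $W^k = P_{\mathcal{K}^n} V^k|_{\mathcal{K}^n}$. Composing the two compressions gives $(XTY)^k = \rho P_{\mathcal{H}^n} V^k|_{\mathcal{H}^n}$, so $XTY \in C_\rho$.

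\textbf{Main obstacle.} The delicate point is Step 1 together with the identity $H_k(U) = (XG(U)Y)^k$. One must check that the dilation passes through the full holomorphic calculus, not only polynomials, and that it uses only the vanishing of $H_k$ at $0$, since the constant term would otherwise not be scaled by $\rho$. Both follow from approximating by polynomials uniformly on a disc of radius $r > 1$ containing the spectra of $S$ and $U$.
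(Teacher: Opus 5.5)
Your proof is correct and follows the paper's argument. You compose the unitary $\rho$-dilation of $S$, which passes to $H(S)=\rho P_{\mathcal{H}^n}H(U)|_{\mathcal{H}^n}$ precisely because $H(0)=0$, with a Sz.-Nagy unitary dilation of the contraction $XG(U)Y$. The paper runs this argument for $G$ and then remarks that it also applies to $z\mapsto XG(z)Y$; you work with $XGY$ directly and spell out the polynomial-approximation and multiplicativity details that the paper leaves implicit.
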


\begin{proof}
  The proof is a straightforward adaptation of the proof in the scalar case \cite[Proposition I.11.5]{SFB+10}. 
  Indeed, let $U$ be a unitary $\rho$-dilation of $S$ on a Hilbert space $\mathcal{K}$ containing $\mathcal{H}$. Then, because $G(0)=0$, we have
  \begin{align*}
      G(S)^{k}=\rho P_{\mathcal{H}^{n}}G(U)^{k}|_{\mathcal{H}^{n}}
  \end{align*}
  for all integers $k\geq1$. Since $U$ is a unitary and $\|G\|_{\infty}\leq1$, it follows that $G(U)$ is contractive. Hence, by Sz.-Nagy's dilation theorem, there exists a unitary $1$-dilation $V$ of $G(U)$ on a Hilbert space $\mathcal{L}$ containing $\mathcal{K}^{n}$. This means that
  \begin{align*}
      G(U)^{k} = P_{\mathcal{K}^{n}}V^{k}|_{\mathcal{K}^{n}}.
  \end{align*}
  Combining the dilations shows that $V$ is unitary $\rho$-dilation of $G(S)$ on $\mathcal{L}$, so $G(S)$ is of class $C_\rho$. Since this argument also applies to the function $z \mapsto X G(z) Y$, the general statement follows.
\end{proof}

We let
\begin{equation*}
  \superrho = \{ T \in B(\mathcal{H}^n): X T Y \in C_\rho \text{ for all } X,Y \in M_n(\mathbb{C}) \text{ with } \|X\|, \|Y\| \le 1 \}.
\end{equation*}
As the next remark shows, $\superrho$ is a proper subset of $C_\rho$ in general. The extra strength
of the conclusion $G(S) \in \superrho$ in Lemma \ref{lem:vanishing_at_zero} will turn out to be vital.

\begin{remark}
  Let $\rho \ge 1$.
    If $T=M\otimes I_{\mathcal{H}}$ for some $M\in M_{n}(\mathbb{C})$, then $T\in \superrho$ if and only if $\|T\|\leq1$. Indeed, if $T\in \superrho$ and $U\in M_{n}(\mathbb{C})$ is unitary such that $M=U|M|$, then $U^* T=|M|\otimes I_{\mathcal{H}}$ is positive (hence self-adjoint) and satisfies $U^* T\in C_{\rho}$. In particular, $\|T\|=\|U^* T\|=r(U^*T)\leq1$. Conversely, if $\|T\|\leq1$, then $XTY\in C_{1}\subset C_{\rho}$ for all $X,Y\in M_{n}(\mathbb{C})$ with $\|X\|,\|Y\|\leq1$.
\end{remark}

The following result is a matrix analogue of \cite[Theorem 2.2]{SV25}. At the same time, we extend the result to $\rho < 1$.

\begin{lemma}\label{lem:class_superrho}
  Let $\rho > 0$ and let $T \in \superrho$. Then for all $X \in M_n(\mathbb{C})$ with $\|X\| \le 1$,
  we have
  \begin{equation*}
    I_{\mathcal{H}^n} - 2 \Big( 1 - \frac{1}{\rho} \Big) \Re(X^* T)
    - T^* \Big( \frac{1}{\rho^2}I - \Big(1 - \frac{1}{\rho} \Big)^2 X X^* \Big) T \ge 0.
  \end{equation*}
\end{lemma}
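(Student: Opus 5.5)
The plan is to rewrite the claimed inequality as a norm bound that depends holomorphically on $X$. Then a maximum principle reduces the claim to unitary $X$, where it should follow from the intrinsic characterization \eqref{eq:rho_contraction_characterization}.

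Put $c = 1 - \frac{1}{\rho}$. Expanding
\begin{equation*}
  (I - cX^*T)^*(I - cX^*T) = I - 2c\,\Re(X^*T) + c^2\, T^*XX^*T
\end{equation*}
shows that the operator in the lemma equals $(I - cX^*T)^*(I - cX^*T) - \frac{1}{\rho^2}T^*T$. So the claim is equivalent to
\begin{equation*}
  \|Th\| \le \rho\, \|(I - cX^*T)h\| \qquad \text{for all } h \in \mathcal{H}^n,
\end{equation*}
uniformly over contractions $X \in M_n(\mathbb{C})$.

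\textbf{Unitary case.} Let $X=U$ be unitary. Then $U^*T \in C_\rho$ because $T \in \superrho$. Apply \eqref{eq:rho_contraction_characterization} to $U^*T$ with $z = 1$, and use $\frac{2}{\rho} - 1 = \frac{1}{\rho^2} - c^2$ together with $T^*UU^*T = T^*T$. This gives
\begin{equation*}
  I - 2c\,\Re(U^*T) - \Big(\tfrac{1}{\rho^2} - c^2\Big)T^*T \ge 0,
\end{equation*}
which is exactly the desired inequality. Hence the claim holds whenever $X$ is unitary.

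\textbf{Reduction to unitaries.} Take a singular value decomposition $X = V\diag(r_1,\dots,r_n)W$ with $V,W$ unitary and $0\le r_j\le 1$. Consider the holomorphic family on the closed polydisc
\begin{equation*}
  \zeta = (\zeta_1,\dots,\zeta_n) \mapsto X_\zeta = V\diag(\zeta_1,\dots,\zeta_n)W, \qquad \zeta \in \overline{\mathbb{D}}^n .
\end{equation*}
On the distinguished boundary $\mathbb{T}^n$ each $X_\zeta$ is unitary. Suppose $I - cX_\zeta^*T$ is invertible for all $\zeta \in \overline{\mathbb{D}}^n$. Then
\begin{equation*}
  \Phi(\zeta) = T\big(I - cX_\zeta^*T\big)^{-1}
\end{equation*}
is a $B(\mathcal{H}^n)$-valued function that is holomorphic in $\overline{\zeta}$, equivalently holomorphic after the substitution $\zeta \mapsto \overline{\zeta}$. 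The unitary case gives $\|\Phi\| \le \rho$ on $\mathbb{T}^n$. The maximum principle on the polydisc, applied to $\langle \Phi(\zeta)h,k\rangle$ for unit vectors $h,k$, then yields $\|\Phi\|\le\rho$ everywhere, in particular at $\zeta=(r_1,\dots,r_n)$. Finally, substituting $h = (I - cX^*T)g$ gives the norm inequality above, and hence the lemma.

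\textbf{Main obstacle: invertibility.} We need $I - cX_\zeta^*T$ to be invertible for all $\zeta \in \overline{\mathbb{D}}^n$. Each $X_\zeta^*T$ is of class $C_\rho$, so its spectral radius is at most $1$.
\begin{itemize}
  \item If $\rho > \frac{1}{2}$, then $|c| < 1$, and invertibility is automatic.
  \item If $\rho \le \frac{1}{2}$, then $|c| \ge 1$ and this argument breaks down.
\end{itemize}
For that range I would first try to avoid inverses altogether. For fixed $h$, the map $\zeta \mapsto \rho^2\|(I - cX_\zeta^*T)h\|^2 - \|Th\|^2$ is the squared norm of an antiholomorphic vector-valued affine function minus a constant. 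I would try to show this quantity is nonnegative on $\overline{\mathbb{D}}^n$ once it is nonnegative on $\mathbb{T}^n$, using a convexity or subharmonicity argument in each variable separately. If that fails, I would instead use the alternative description of $C_\rho$ through $\Re(I - zW)^{-1}$ for $W \in C_\rho$, applied to $W = X_\zeta^* T$ with $|\zeta_j|<1$, and pass to the limit.
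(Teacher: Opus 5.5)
Your reformulation $L_X(T)=(I-cX^*T)^*(I-cX^*T)-\frac{1}{\rho^2}T^*T$, the unitary case, and the maximum-principle reduction are all correct. So your proof is complete for $\rho>\frac12$.

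For $0<\rho\le\frac12$ there is a genuine gap. You leave the invertibility of $I-cX_\zeta^*T$ open, and neither fallback closes it.

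The first fallback cannot work as stated. For fixed $h$, the map $\zeta\mapsto\|(I-cX_\zeta^*T)h\|^2$ is plurisubharmonic (indeed convex). Such functions propagate upper bounds from $\mathbb{T}^n$ inward, not lower bounds. For example, if $a\perp b$ then $\|a+\bar\zeta b\|^2=\|a\|^2+|\zeta|^2\|b\|^2$, which is smallest at $\zeta=0$. Inverting, as in your main argument, is exactly what lets the test vector depend on $\zeta$.

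The second fallback is not specified, and interior information of the form $X_\zeta^*T\in C_\rho$ cannot suffice by itself. The operator in the lemma equals the left-hand side of \eqref{eq:rho_contraction_characterization} for $X^*T$ at $z=1$ minus $\frac{1}{\rho^2}T^*(I-XX^*)T$. Hence for non-unitary $X$ the lemma is strictly stronger than $X^*T\in C_\rho$.

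The missing ingredient, which the paper uses, is the sharper spectral radius bound $r(S)\le\frac{\rho}{2-\rho}$ for $S\in C_\rho$ when $\rho<1$. It follows directly from \eqref{eq:rho_contraction_characterization}. For $\lambda\in\partial\sigma(S)$, take unit vectors $h_k$ with $\|(S-\lambda)h_k\|\to0$, and choose $|z|=1$ with $\bar z\lambda=-|\lambda|$. In the limit you get $1-2(\frac1\rho-1)|\lambda|-(\frac2\rho-1)|\lambda|^2\ge0$, i.e.\ $(1+|\lambda|)\bigl(1-(\frac2\rho-1)|\lambda|\bigr)\ge0$.

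Applied to $S=X_\zeta^*T$, this gives $r(cX_\zeta^*T)\le\frac{1-\rho}{\rho}\cdot\frac{\rho}{2-\rho}=\frac{1-\rho}{2-\rho}<1$ for all $\zeta\in\overline{\mathbb{D}}^n$. Then your argument runs verbatim for every $\rho>0$.

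With this patch, your route is the paper's up to a Cayley transform. The paper applies the minimum principle to the pluriharmonic map $X\mapsto\Re\bigl(I+\frac2\rho T(I-YT)^{-1}\bigr)$ with $Y=\frac1\rho I+cX^*$. For your $\Phi=T(I-cX^*T)^{-1}$ one has $I+\frac2\rho T(I-YT)^{-1}=(I+\frac1\rho\Phi)(I-\frac1\rho\Phi)^{-1}$. Your normalization has the minor advantage of needing only $I-cX^*T$ to be invertible, so the paper's rescaling $T\mapsto rT$ becomes unnecessary.
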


\begin{proof}
  The proof is similar to the proof of \cite[Theorem 2.2]{SV25}. Since the matrix-valued case and the case $\rho < 1$ require some extra care, we provide a few more details.

  Let
  \begin{equation*}
    L_X(T) = I_{\mathcal{H}^n} - 2 \Big( 1 - \frac{1}{\rho} \Big) \Re(X^* T)
    - T^* \Big( \frac{1}{\rho^2}I - \Big(1 - \frac{1}{\rho} \Big)^2 X X^* \Big) T.
  \end{equation*}
  Given $X \in M_n(\mathbb C)$ with $\|X\| \le 1$, let $Y = \frac{1}{\rho} I + (1 - \frac{1}{\rho}) X^*$. We claim that the spectral radius of $Y T$ is most $1$.
  If $\rho \ge 1$, then $\|Y\| \le 1$, so $Y T \in C_\rho$, which implies
  the spectral radius bound. If $\rho < 1$, then $\|Y\| \le \frac{2 - \rho}{\rho}$,
  so $\frac{\rho}{2 - \rho} Y T \in C_\rho$. But the spectral
  radius of an operator of class $C_\rho$ is at most $\frac{\rho}{2 - \rho}$;
  see \cite[Lemma 5]{BS67} or \cite[Theorem 1]{Furuta68a}.
  Hence the spectral radius of $Y T$ is at most $1$ in all cases.
  Moreover, by replacing $T$ with $r T$ for $0 < r <1$, we may even assume
  that this inequality strict.
  
  Hence, we may define
  \begin{equation*}
    R(X) = \Big( I_{\mathcal{H}^n} - \Big( \frac{1}{\rho}I + \Big( 1 - \frac{1}{\rho} \Big) X^* \Big) T \Big)^{-1}
  \end{equation*}
  for $X\in M_n(\mathbb{C})$ with $\|X\|\leq1$.
  A computation shows that
  \begin{equation*}
    \Re\Big( I_{\mathcal{H}^n} + \frac{2}{\rho} T R(X)\Big) = R(X)^* L_X(T) R(X).
  \end{equation*}
  Hence $L_X(T) \ge 0$ if and only if $\Re( I_{\mathcal{H}^n} + \frac{2}{\rho} T R(X)) \ge 0$.
  The map $X \mapsto \Re( I_{\mathcal{H}^n} + \frac{2}{\rho} T R(X))$ is pluriharmonic,
  so using a singular value decomposition of $X$ and the minimum principle for harmonic functions $n$ times,
  we see that $\Re(I_{\mathcal{H}^n} + \frac{2}{\rho}T R(X)) \ge 0$ for all $X \in M_n(\mathbb{C})$
  with $\|X\| \le 1$ if and only if this holds for all unitary $X$.
  Going back to $L_X(T)$, we see that for unitary $X$, the operator $L_X(T)$ is positive by \eqref{eq:rho_contraction_characterization} since $X^* T \in C_\rho$, as $T \in \superrho$.
\end{proof}

\section{Potapov--Möbius transforms}

We start by recalling a few facts about Potapov--M\"obius transforms.
If $A,X \in B(\mathcal{H})$ such that $\|A\| < 1$ and $I + A^* X$ is invertible, let
\begin{equation*}
  M_A(X) = (I - A A^*)^{-1/2} (X + A)(I + A^* X)^{-1} (I - A^* A)^{1/2}.
\end{equation*}
The map $M_{A}$ sends the closed unit ball of $B(\mathcal H)$ into itself, see e.g.\ \cite[Theorem 8.19]{ISL85}.
As in \cite{HM26}, we will use Potapov--M\"obius transforms in the case when the Hilbert space decomposes
as $\mathcal{H}^n$, so that we have identifications $B(\mathcal{H}^n) = M_n(B(\mathcal{H})) = M_n(\mathbb{C}) \otimes B(\mathcal{H})$.
Given a scalar matrix $A \in M_n(\mathbb{C})$ with $\|A\| < 1$, we identify $A$ with $A \otimes I_{\mathcal{H}} \in B(\mathcal{H}^n)$
and thus define $M_A(T)$ for $T \in B(\mathcal{H}^n)$ for which $I + A^* T =  I_{\mathcal{H}^n} + (A^* \otimes I_{\mathcal{H}}) T$
is invertible.

From Lemma \ref{lem:vanishing_at_zero}, we are led to analyzing $M_A(T)$ for operators $T \in \superrho$. To this end, we begin with the following result, which, in fact, holds for a broader class of operators.

\begin{lemma}
  \label{lem:M_A_upper_bound}
  Let $T \in B(\mathcal{H}^n)$ and $A\in M_{n}(\mathbb{C})$ with $\|A\|<1$ and assume that $I+A^* T$ is invertible. Let $B \in M_n(\mathbb{C})$ be a positive matrix that commutes with $A^{*}A$ and for which
  $B - A^* A$ is positive and invertible. Then
  \begin{equation*}
    M_A(T)^* M_A(T) \le B
  \end{equation*}
  if and only if
  \begin{equation*}
    T^* C T - 2 \Re(DT)\le E,
  \end{equation*}
  where
  \begin{align*}
    C &= (I - A A^*)^{-1/2} (I - A B A^*) (I - A A^*)^{-1/2},\\
    D &= (B-I) (I-A^* A)^{-1} A^*, \\
    E &= (I - A^* A)^{-1/2} (B - A^* A) (I - A^* A)^{-1/2}.
  \end{align*}
\end{lemma}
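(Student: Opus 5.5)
The plan is to unwind the definition of $M_A(T)$ and turn the inequality into a quadratic inequality in $T$. Write $D_A = (I-AA^*)^{1/2}$ and $D_{A^*}' = (I-A^*A)^{1/2}$, both scalar matrices acting as $\,\cdot\otimes I_{\mathcal H}$. Put $W=(I+A^*T)^{-1}$. Then
\[
M_A(T)=D_A^{-1}(T+A)\,W\,D_{A^*}' .
\]
The inequality $M_A(T)^*M_A(T)\le B$ is equivalent to the following, since conjugation by an invertible operator preserves order:
\[
W^*(T+A)^*(I-AA^*)^{-1}(T+A)W \le D_{A^*}'^{-1} B D_{A^*}'^{-1}.
\]
Here I use that $B$ commutes with $A^*A$, hence with $D_{A^*}'$. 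Conjugating further by the invertible $I+A^*T$, this becomes
\[
(T+A)^*(I-AA^*)^{-1}(T+A) \le (I+A^*T)^*\,(I-A^*A)^{-1}B\,(I+A^*T).
\]
Again, $(I-A^*A)^{-1}$ commutes with $B$, so $D_{A^*}'^{-1}BD_{A^*}'^{-1}=(I-A^*A)^{-1}B$ is a positive matrix.

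Next I expand both sides and collect terms.

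\textbf{Quadratic terms.} These give
\[
T^*\bigl[(I-AA^*)^{-1}-A(I-A^*A)^{-1}BA^*\bigr]T.
\]
To identify this bracket with $C$, use the intertwining $A(I-A^*A)^{-1}=(I-AA^*)^{-1}A$. Together with the commutation of $B$ and $A^*A$, this lets me write $A(I-A^*A)^{-1}BA^* = (I-AA^*)^{-1/2}ABA^*(I-AA^*)^{-1/2}$. For that I need $A\,f(A^*A)=f(AA^*)\,A$ and the fact that $B$ commutes with $(I-A^*A)^{-1/2}$. The bracket then becomes $(I-AA^*)^{-1/2}(I-ABA^*)(I-AA^*)^{-1/2}=C$.

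\textbf{Linear terms.} These are
\[
2\Re\bigl(A^*(I-AA^*)^{-1}T\bigr) \quad\text{on the left}, \qquad 2\Re\bigl((I-A^*A)^{-1}BA^*T\bigr) \quad\text{on the right}.
\]
By the intertwining, $A^*(I-AA^*)^{-1}=(I-A^*A)^{-1}A^*$. Moving the linear terms to one side gives $-2\Re(DT)$ with $D=(B-I)(I-A^*A)^{-1}A^*$; here $B$ commutes with $(I-A^*A)^{-1}$, so the order of factors is immaterial.

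\textbf{Constant terms.} These are $A^*(I-AA^*)^{-1}A=(I-A^*A)^{-1}A^*A$ and $(I-A^*A)^{-1}B$. Their difference is $(I-A^*A)^{-1}(B-A^*A)=E$, using commutativity once more.

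Putting the three pieces together, the inequality reads exactly $T^*CT-2\Re(DT)\le E$. Every step was an equivalence, so the claimed "if and only if" follows.

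The main obstacle is only bookkeeping with noncommuting factors. I need to check carefully which scalar matrices commute: $B$ commutes with every function of $A^*A$, but not with $AA^*$. I also need to apply the intertwining identity in the correct direction so that $C$ comes out in the stated symmetric form. The hypothesis that $B-A^*A$ is positive and invertible is not needed for the equivalence itself, beyond ensuring $E>0$. The hypothesis $B\ge0$ ensures the right-hand matrix is positive, though the algebra does not need it either.
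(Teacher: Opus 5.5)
Your proof is correct and follows essentially the same route as the paper: you conjugate by $(I-A^*A)^{-1/2}(I+A^*T)$ (in two steps rather than one), expand by order in $T$, and simplify using that $B$ commutes with functions of $A^*A$ together with the intertwining $A(I-A^*A)^{-1/2}=(I-AA^*)^{-1/2}A$. Your bookkeeping of the quadratic, linear and constant terms checks out, and you are right that the positivity and invertibility of $B-A^*A$ play no role in the equivalence itself.
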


\begin{proof}
Note that $M_A(T)^* M_A(T)\leq B$ if and only if
\begin{equation*}
(I - A^* A)^{1/2} (I + T^* A)^{-1}(T^* + A^*)(I - A A^*)^{-1} (T + A)(I + A^* T)^{-1} (I - A^* A)^{1/2}\leq B.
\end{equation*}
Conjugating by $(I-A^* A)^{-1/2}(I+A^* T)$ yields the equivalent condition
\begin{equation*}
    (T^*+A^*)(I-AA^*)^{-1}(T+A)\leq(I+T^* A)(I-A^*A)^{-1/2}B(I-A^* A)^{-1/2}(I+A^*T).
\end{equation*}
Expanding and grouping terms by their order in $T$, this is seen to be equivalent to
\begin{align*}
    &T^*(I-AA^*)^{-1}T - T^* A(I-A^* A)^{-1/2}B(I-A^* A)^{-1/2} A^* T \\
    &\qquad {}+T^*(I-AA^*)^{-1}A + A^*(I-AA^*)^{-1}T \\
    &\qquad {}- T^*A(I-A^*A)^{-1/2}B(I-A^*A)^{-1/2} -(I-A^*A)^{-1/2}B(I-A^*A)^{-1/2}A^*T \\
    {}\leq{}  & (I-A^*A)^{-1/2}B(I-A^*A)^{-1/2}-A^*(I-AA^*)^{-1}A.
\end{align*}
Finally, since $B$ and $A^* A$ commute and $A (I - A^* A)^{-1/2} = (I - A A^*)^{-1/2} A$,
this is equivalent to
\begin{align*}
    &T^*(I-AA^*)^{-1/2}(I-ABA^*)(I-AA^*)^{-1/2}T\\
    &\qquad {}-2\Re\big((I-A^*A)^{-1/2}(B-I)(I-A^*A)^{-1/2}A^* T\big)\\
    {}\leq{}& (I-A^* A)^{-1/2}(B-A^* A)(I-A^* A)^{-1/2}. \qedhere
\end{align*}
\end{proof}

Note that the coefficients $C$, $D$ and $E$, while somewhat complicated, are independent of $T$.

\begin{remark}
    The condition that $I + A^*T$ is invertible is automatically satisfied for $T\in\superrho$ and $A\in M_{n}(\mathbb{C})$ with $\|A\|<1$. This follows from the fact that the spectrum of an operator in $C_{\rho}$ lies in the closed unit disc $\overline{\mathbb{D}}$.
\end{remark}

\begin{lemma}\label{lem:PSD_reduction_full}
    Let $\rho>0$ and $T \in \superrho$. Let $A\in M_{n}(\mathbb{C})$ with $\|A\|<1$
    and let $b\colon \sigma(A^*A) \to [0,\infty)$ be a (continuous) function
    satisfying $b(t) > t$ for all $t \in \sigma(A^*A)$.
    Define $B = b(A^*A)$. If there exist $\delta \ge 0$
    and a function $x\colon \sigma(A^*A) \to [-1,1]$ such that
    \begin{equation*}
         W(t) = \begin{bmatrix}
            \dfrac{b(t)-t}{1-t} & \dfrac{(b(t)-1)\sqrt{t}}{1-t}\\[2mm]
            \dfrac{(b(t)-1)\sqrt{t}}{1-t} & \dfrac{b(t)t-1}{1-t}
        \end{bmatrix}
        +\delta\begin{bmatrix}
            -1 & \Big(1-\dfrac{1}{\rho}\Big)x(t)\\[2mm]
            \Big(1-\dfrac{1}{\rho}\Big)x(t) & \dfrac{1}{\rho^2}-\Big(1-\dfrac{1}{\rho}\Big)^2 x(t)^2
        \end{bmatrix}
    \end{equation*}
    is positive for every $t \in \sigma(A^*A)$, then
    \begin{equation*}
        M_A(T)^*M_A(T)\leq B.
    \end{equation*}
\end{lemma}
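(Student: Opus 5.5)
By Lemma \ref{lem:M_A_upper_bound}, which applies because $B=b(A^*A)$ commutes with $A^*A$ and $B-A^*A=(b-\mathrm{id})(A^*A)$ is positive and invertible, it suffices to prove
\begin{equation*}
  E + 2\Re(DT) - T^* C T \ge 0 .
\end{equation*}
The plan is to write this operator as a positive combination of two pieces. One piece is a quadratic form that is manifestly positive. The other is a multiple $\delta L_X(T)$ of the positive operator from Lemma \ref{lem:class_superrho}, for a suitable contraction $X$.

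\textbf{Step 1: diagonalize $A$.} Take a singular value decomposition $A=U\Sigma V^*$ with $U,V$ unitary and $\Sigma=\diag(s_1,\dots,s_n)$, $s_j\ge 0$. Define $X_0=x(\Sigma^2)$, a real diagonal matrix with entries $x(s_j^2)\in[-1,1]$. Since $\superrho$ is invariant under $T\mapsto U'TV'$ for scalar unitaries $U',V'$, we may conjugate by $U,V$ and assume, after checking the transformation rules for $C,D,E$, that $A=\Sigma$ is diagonal and nonnegative. Then $C,D,E$ are the diagonal matrices $c(\Sigma^2)$, $d(\Sigma^2)$, $e(\Sigma^2)$, where
\begin{equation*}
  c(t)=\frac{1-b(t)t}{1-t},\qquad d(t)=\frac{(b(t)-1)\sqrt t}{1-t},\qquad e(t)=\frac{b(t)-t}{1-t}.
\end{equation*}
For the off-diagonal term we use $A(I-A^*A)^{-1/2}=(I-AA^*)^{-1/2}A$ together with $A=A^*=\Sigma$. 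Comparing with $W(t)$: the $(1,1)$ entry of the first matrix is $e(t)$, the off-diagonal entry is $d(t)$, and the $(2,2)$ entry is $-c(t)$. Hence
\begin{equation*}
  E+2\Re(DT)-T^*CT=\begin{bmatrix} I\\ T\end{bmatrix}^*\begin{bmatrix} e & d\\ d & -c\end{bmatrix}(\Sigma^2)\begin{bmatrix} I\\ T\end{bmatrix},
\end{equation*}
where the $2\times 2$ block matrix of functions of $\Sigma^2$ is understood entrywise as diagonal matrices.

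\textbf{Step 2: rewrite $L_X(T)$ in the same form.} Apply Lemma \ref{lem:class_superrho} with $X=X_0$, which is real diagonal, so $X^*=X$ and $XX^*=X^2$. Writing $\Re(X^*T)=\tfrac12(XT+T^*X)$ gives
\begin{equation*}
  L_X(T)=\begin{bmatrix} I\\ T\end{bmatrix}^*\begin{bmatrix} I & -(1-\frac1\rho)X\\ -(1-\frac1\rho)X & -\frac1{\rho^2}+(1-\frac1\rho)^2X^2\end{bmatrix}\begin{bmatrix} I\\ T\end{bmatrix}\ge 0 .
\end{equation*}
This is exactly $-1$ times the $\delta$-matrix in $W$, evaluated at $\Sigma^2$. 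Therefore
\begin{equation*}
  E+2\Re(DT)-T^*CT-\delta L_X(T)=\begin{bmatrix} I\\ T\end{bmatrix}^* W(\Sigma^2)\begin{bmatrix} I\\ T\end{bmatrix},
\end{equation*}
where $W(\Sigma^2)$ is the $2n\times 2n$ scalar matrix, tensored with $I_{\mathcal H}$, whose blocks are the diagonal matrices $W_{ij}(\Sigma^2)$.

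\textbf{Step 3: positivity.} After a permutation, $W(\Sigma^2)$ is the direct sum of the $2\times 2$ matrices $W(s_j^2)$, each positive by hypothesis. Hence $W(\Sigma^2)\ge 0$, so the right-hand side above is $\ge 0$. Adding $\delta L_X(T)\ge0$ gives $E+2\Re(DT)-T^*CT\ge 0$, which is the required inequality.

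\textbf{Main obstacle.} The delicate part is the bookkeeping in Step 1. One must check that the reduction to diagonal $A$ is legitimate: replacing $T$ by $U^*TV$, say, must transform $C,D,E$ consistently. One must also track signs and the factors $\sqrt t$ and $(1-t)^{-1}$ so that the coefficients match $W$ exactly. In particular, the off-diagonal coefficient $-(1-\frac1\rho)X$ from $L_X$ only lines up with $\delta(1-\frac1\rho)x(t)$ after a sign, and that sign can be absorbed by replacing $x$ with $-x$, which is allowed since $x$ takes values in the symmetric interval $[-1,1]$.
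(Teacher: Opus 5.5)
Your argument is correct and is essentially the paper's proof. The paper uses the polar decomposition $A=U(A^*A)^{1/2}$, sets $S=U^*T$ and $X=U\,x(A^*A)$, and writes $E+2\Re(DT)-T^*CT-\delta L_X(T)$ as the quadratic form of $W(A^*A)$ (continuous functional calculus) on the column $(I,S)$. You instead first reduce to diagonal $A=\Sigma$ via an SVD, which is legitimate since $M_{U\Sigma V^*}(T)=U\,M_\Sigma(U^*TV)\,V^*$ and $b(A^*A)=V\,b(\Sigma^2)\,V^*$.

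One correction to your closing remark: no sign adjustment of $x$ is needed. Your Step 2 already shows that $-\delta L_X(T)$ contributes exactly the $\delta$-matrix in $W$, so replacing $x$ by $-x$ would actually break the match.
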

\begin{proof}
Let $C$, $D$ and $E$ be defined as in Lemma \ref{lem:M_A_upper_bound}. By the assumption on $b$,
the matrix
\begin{equation*}
    B-A^*A=b(A^*A)-A^*A
\end{equation*}
is positive and invertible, and $B$ commutes with $A^* A$. Define
\begin{equation*}
    Q(T)=E+2\Re(DT)-T^* C T.
\end{equation*}
By Lemma \ref{lem:M_A_upper_bound}, we have that $M_A(T)^* M_A(T)\leq B$ if and only if $Q(T)\geq0$.

Choose a unitary polar decomposition $A=U(A^*A)^{1/2}$, and set
\begin{equation*}
    X=Ux(A^*A).
\end{equation*}
Since $\|X\|\leq1$ and $T\in\superrho$, Lemma \ref{lem:class_superrho} implies that
\begin{equation*}
    L_X(T)=I_{\mathcal{H}^n} - 2 \Big( 1 - \frac{1}{\rho} \Big) \Re(X^* T)
    - T^* \Big( \frac{1}{\rho^2}I - \Big(1 - \frac{1}{\rho} \Big)^2 X X^* \Big) T\geq0.
\end{equation*}
Thus it suffices to prove that $Q(T)-\delta L_X(T)\geq0$.

Let $S=U^*T$. Since $B=b(A^*A)$ and $A A^* = U A^* A U^*$, we compute
\begin{align*}
    C&=U(I-A^*A)^{-1}(I-BA^*A)U^*,\\
    D&=(B-I)(I-A^*A)^{-1}(A^*A)^{1/2}U^*,\\
    E&=(I-A^*A)^{-1}(B-A^*A).
\end{align*}
Consequently,
\begin{align*}
    Q(T)
    &=(I-A^*A)^{-1}(B-A^*A)\\
    &\qquad{}+2\Re\big((I-A^*A)^{-1}(B-I)(A^*A)^{1/2}S\big)\\
    &\qquad{}+S^* (I-A^*A)^{-1}(B A^*A-I)S,
\end{align*}
and, likewise,
\begin{equation*}
    L_X(T)=I_{\mathcal{H}^n}-2\Big(1-\frac{1}{\rho}\Big)\Re(x(A^*A)S)-S^* \Big(\frac{1}{\rho^2}I-\Big(1 - \frac{1}{\rho}\Big)^2x(A^*A)^2\Big)S.
\end{equation*}
It follows that
\begin{equation*}
    Q(T)-\delta L_X(T)
    =\begin{bmatrix}
        I_{\mathcal{H}^n} & S^*
    \end{bmatrix}W(A^*A)\begin{bmatrix}
        I_{\mathcal{H}^n} \\ S
    \end{bmatrix}.
\end{equation*}
Here, $W(A^*A)$ means the continuous functional calculus applied entrywise.
Since $W(t) \ge 0$ for all $t \in \sigma(A^*A)$, it follows that
$W(A^*A)\geq0$. Thus $Q(T)-\delta L_X(T)\geq0$, as desired.
\end{proof}

Recall that
\begin{align*}
\sbd(t) =\frac{\rho}{2}(1-t)+\sqrt{\frac{\rho^{2}}{4}(1-t)^{2}+t}.
\end{align*}
This function satisfies the quadratic equation
  \begin{equation}
    \label{eqn:quadratic_equation_c_rho}
    \sbd(t)^2 - \rho(1-t) \sbd(t) - t = 0.
  \end{equation}
The following simple computation will be used repeatedly, so we isolate it as a lemma.

\begin{lemma}
  \label{lem:threshold}
  Let $\rho > 0$ with $\rho \neq 1$ and let $t \in [0,1]$. Let $K = \sbd(t)$.
  Then
  \begin{equation*}
    t = \frac{K(\rho-K)}{\rho K-1}
    \le \frac{(\rho-1)^2 K^2}{(\rho K-1)^2}.
  \end{equation*}
\end{lemma}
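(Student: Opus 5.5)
The equality $t = \frac{K(\rho-K)}{\rho K-1}$ comes straight from the quadratic equation \eqref{eqn:quadratic_equation_c_rho}. The inequality then reduces to a one-variable polynomial inequality in $K$, which turns out to be a perfect square.

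\emph{Step 1: the identity.} Rewrite \eqref{eqn:quadratic_equation_c_rho} as
\begin{equation*}
K^2-\rho K+\rho K t - t=0, \qquad\text{i.e.}\qquad t(\rho K-1)=K(\rho-K).
\end{equation*}
To divide by $\rho K-1$, I must check it is nonzero. For $\rho>1$ we have $K\ge 1$, so $\rho K-1\ge \rho-1>0$. For $\rho<1$ we have $K\le 1$, so $\rho K-1\le \rho-1<0$. Here I use the range of $\sbd$: it runs between $\rho$ and $1$ monotonically, since $\sbd(0)=\rho$ and $\sbd(1)=1$. This can also be checked directly: $\sbd(t)\ge 1$ if and only if $\rho(1-t)\le 1-t$ fails appropriately, which follows from evaluating the quadratic $q(k)=k^2-\rho(1-t)k-t$ at $k=1$, giving $q(1)=(1-\rho)(1-t)$. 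If $\rho>1$ then $q(1)\le 0$, so the positive root is at least $1$. If $\rho<1$ then $q(1)\ge0$ with $q(0)=-t\le0$, so the positive root is at most $1$.

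\emph{Step 2: the inequality.} Multiply both sides by $(\rho K-1)^2>0$. It then suffices to show
\begin{equation*}
K(\rho-K)(\rho K-1)\le (\rho-1)^2K^2 .
\end{equation*}
Since $K>0$, divide by $K$ to get $(\rho-K)(\rho K-1)\le(\rho-1)^2K$. Expanding the left side gives $\rho^2K-\rho-\rho K^2+K$. So the inequality becomes
\begin{equation*}
0\le (\rho^2-2\rho+1)K-\rho^2K+\rho+\rho K^2-K=\rho K^2-2\rho K+\rho=\rho(K-1)^2,
\end{equation*}
which holds trivially.

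The only point requiring care, and the main obstacle, is Step 1: the sign and nonvanishing of $\rho K-1$, together with positivity of $K$. Both follow from locating $K$ between $1$ and $\rho$, which I settle via the sign of the quadratic at $k=1$ as described. Everything else is routine algebra.
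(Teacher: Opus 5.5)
Your proof is correct and follows the paper's argument. The equality comes from the quadratic equation \eqref{eqn:quadratic_equation_c_rho}, and your reduction to $\rho(K-1)^2\ge 0$ is the same computation as the paper's one-line identity $\frac{(\rho-1)^2K^2}{(\rho K-1)^2}-t=\frac{\rho K(K-1)^2}{(\rho K-1)^2}\ge 0$. Your check that $\rho K-1\neq 0$ via the sign of $q(1)=(1-\rho)(1-t)$ is a detail the paper leaves implicit. Positivity of $K$ is immediate from the formula for $\sbd$, and the garbled phrase ``fails appropriately'' should simply be deleted.
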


\begin{proof}
  The equality follows from the quadratic equation \eqref{eqn:quadratic_equation_c_rho}.
  For the inequality, observe that
  \begin{equation*}
    \frac{(\rho-1)^2 K^2}{(\rho K-1)^2} -t
    = \frac{\rho K (K-1)^2}{(\rho K - 1)^2} \ge 0. \qedhere
  \end{equation*}
\end{proof}

We are now able to prove the desired bound on Potapov--Möbius transforms, applied to operators of class $\superrho$.
\begin{proposition}\label{prop:main_result_K}
    Let $\rho>0$ and let $T\in\superrho$. Let $A\in M_n(\mathbb{C})$ with $\|A\|<1$.
    \begin{enumerate}[label=(\alph*)]
        \item If $0<\rho\leq1$, let $K=\sbd(\|A\|^2)$ and define the increasing affine function
          $\mbd\colon [0,1] \to [\rho^2,1]$ by
        \begin{equation*}
            \mbd(t)=\rho K-(\rho K-1)t.
        \end{equation*}
        Then
        \begin{equation*}
            M_A(T)^*M_A(T)\leq \mbd(A^*A).
        \end{equation*}

        \item If $\rho>1$, let $K=\sbd(m(A)^2)$
          and define the decreasing continuous function
          $\mbd\colon [0,1] \to [1, \rho^2]$ by
        \begin{equation*}
            \mbd(t)=\begin{cases}
                \rho K-(\rho K-1)t& \text{if } 0\leq t\leq \big(\frac{(\rho-1)K}{\rho K-1}\big)^2,\\
                1+\dfrac{(K-1)^2(1+\sqrt{t})}{2(K-1)-(\rho K-1)(1-\sqrt{t})}& \text{if } \big(\frac{(\rho-1)K}{\rho K-1}\big)^2<t \le 1.
            \end{cases}.
        \end{equation*}
        Then
        \begin{equation*}
            M_A(T)^*M_A(T)\leq \mbd(A^*A).
        \end{equation*}
    \end{enumerate}
\end{proposition}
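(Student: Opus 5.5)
The plan is to apply Lemma \ref{lem:PSD_reduction_full} with $b=\mbd$ restricted to $\sigma(A^*A)$. One constant $\delta\ge0$ must serve all eigenvalues, but the function $x(t)\in[-1,1]$ may vary with $t$. First I check the hypothesis $b(t)>t$. In case (a) we have $\mbd(t)-t=\rho K(1-t)>0$ for $t<1$. In case (b) we have $\mbd\ge1>t$ for $t<1$. The spectrum lies in $[0,\|A\|^2]\subset[0,1)$.

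Everything then reduces to a pointwise $2\times2$ positivity problem: for each $t\in\sigma(A^*A)$, find $x(t)$ with $W(t)\ge0$. Since $W(t)$ is $2\times 2$, positivity means the diagonal entries are nonnegative and $\det W(t)\ge 0$.

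The choice of $\delta$ should come from the extremal eigenvalue $s$, which is $\|A\|^2$ in case (a) and $m(A)^2$ in case (b). At $t=s$ we have $K=\sbd(s)$, and Lemma \ref{lem:threshold} gives $s=K(\rho-K)/(\rho K-1)$. On the affine branch $b(t)=\rho K-(\rho K-1)t$:
\begin{itemize}
\item the $(1,1)$ entry becomes $\rho K-\delta$, which forces $\delta\le\rho K$;
\item the $(2,2)$ entry is $b(t)t-1$ divided by $1-t$, which is negative for small $t$ and must be compensated by $\delta(\frac1{\rho^2}-(1-\frac1\rho)^2x^2)$.
\end{itemize}
I expect the natural choice to be $\delta=\rho K$, or whatever value makes the determinant vanish exactly at $t=s$, since that is where the scalar bound is sharp. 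With this $\delta$, I choose $x(t)$ to make the off-diagonal entry vanish or to minimize the determinant's deficit, e.g. $x(t)=-\frac{(b(t)-1)\sqrt t}{\delta(1-1/\rho)(1-t)}$ clipped to $[-1,1]$.

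The key observation is that on the affine branch $\det W(t)$, as a function of $t$ (after optimizing over $x$), should factor with a nonnegative sign for $t\ge s$ in case (b) and $t\le s$ in case (a). This matches the monotonicity of $\sbd$: eigenvalues beyond the extremal one are more favorable.

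The threshold $t_0=\big(\frac{(\rho-1)K}{\rho K-1}\big)^2$ should be exactly where the optimal $x(t)$ hits the constraint $|x|=1$. For $t>t_0$ in case (b), I take $x=\pm1$. The second formula for $\mbd$ should then be obtained by solving $\det W(t)=0$ for $b(t)$ with $x=\pm1$ fixed and $\delta$ as above. I expect a linear equation in $b$ after substituting $\sqrt t$, which yields the rational expression in $\sqrt t$. Continuity at $t_0$ confirms consistency. In case (a), Lemma \ref{lem:threshold} gives $t\le s\le t_0$ on the spectrum, so the clipping never occurs and the affine formula suffices.

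The main obstacle is the algebra verifying $\det W(t)\ge0$ on each branch with a single $\delta$. This means showing that a polynomial in $\sqrt t$ (with parameters $\rho,K$) has a definite sign on the relevant interval, using the relation $s(\rho K-1)=K(\rho-K)$ to eliminate $s$. It also requires checking that the diagonal entries stay nonnegative, in particular $\frac1{\rho^2}-(1-\frac1\rho)^2x^2\ge0$ issues when $\rho$ is large or small. I would first compute symbolically at $\delta=\rho K$ and factor out $(t-s)$ or $(\sqrt t-\sqrt{t_0})^2$. I expect perfect-square factors like those in Lemma \ref{lem:threshold}'s proof, $\rho K(K-1)^2/(\rho K-1)^2$.
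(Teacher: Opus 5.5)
Your plan is correct and is essentially the paper's proof. With $\delta=\rho K$ and $x(t)=-\frac{\rho K-1}{(\rho-1)K}\sqrt{t}$ on the affine branch one gets $W(t)=\diag\big(0,(1-\frac{1}{\rho K})(t-s)\big)$, where $s$ is the extremal eigenvalue in your notation. On the second branch, $x=-1$ together with $\det W(t)=0$ (which is indeed linear in $b$) makes $W(t)$ a positive multiple of a rank-one matrix $w w^*$, so the thing to check there is the positivity of that prefactor (equivalently, the diagonal entries), not the determinant. Two small points: only $x=-1$, not $+1$, works on the second branch, and $\rho=1$ must be handled separately, since your formula for $x(t)$ becomes $0/0$ and Lemma \ref{lem:threshold} needs $\rho\neq1$ (there $K=b=\delta=1$ and $W(t)=0$ for any $x$).
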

\begin{proof}
    We use Lemma \ref{lem:PSD_reduction_full}. In both cases, set $b=\mbd$ and $\delta=\rho K$. Observe that $b(t) > t$ for all $t \in \sigma(A^* A)$ because $\|A\| < 1$.
    We have to show that there exists a function $x\colon\sigma(A^*A)\to[-1,1]$ with $W(t)\geq0$ for all
    $t\in\sigma(A^*A)$, where $W(t)$ is the matrix from Lemma \ref{lem:PSD_reduction_full}.

    If $\rho=1$, then $K=1$ and $b(t)=1$ and we may simply take $x(t)=0$ for all
    $t\in\sigma(A^*A)$. Then $W(t)=0$ for all $t\in\sigma(A^*A)$, which establishes the result in the case $\rho=1$.

    Let $\rho \neq 1$. We first deal with the affine branch of $b$.
    If $t \in \sigma(A^* A)$ is such that $b(t)=\rho K-(\rho K-1)t$, then we take
    \begin{equation*}
        x(t)=-\frac{\rho K-1}{(\rho-1)K}\sqrt{t}.
    \end{equation*}
    A direct computation gives that
    \begin{equation}
      \label{eqn:W_affine_branch}
        W(t)=\begin{bmatrix}
            0&0\\
            0&\big(1-\frac{1}{\rho K}\big)(t-t_0)
        \end{bmatrix},
        \qquad \text{ where } \qquad
        t_0=\frac{K(\rho-K)}{\rho K-1}.
    \end{equation}

    We now distinguish between the cases $\rho<1$ and $\rho>1$.
    If $\rho<1$, then $1 - \frac{1}{\rho K} \leq 0$. Moreover, since $K = \sbd(\|A\|^2)$, we have
    $t_0 = \|A\|^2$ by Lemma \ref{lem:threshold}.
    Hence every $t \in \sigma(A^*A)$ satisfies
    $t\leq t_0$. Thus \eqref{eqn:W_affine_branch} implies that
    $W(t) \ge 0$ for all $t\in\sigma(A^*A)$.
    It remains to check that $x$ takes values in
    $[-1,1]$. Since $x(t)\leq0$, it suffices to show that $x(\|A\|^2)\geq-1$, which follows from Lemma \ref{lem:threshold}.
    This proves part (a).

    Suppose now that $\rho>1$.
    Then $t_0 = m(A)^2$.
    Set
    \begin{equation*}
        s_\dagger=\frac{(\rho-1)K}{\rho K-1},
    \end{equation*}
    so that $s_\dagger^2$ is the threshold between the two regimes of $\mbd$.
    Then
    \begin{equation*}
        x(t)=-\frac{\rho K-1}{(\rho-1)K}\sqrt{t}=-\frac{\sqrt{t}}{s_\dagger},
    \end{equation*}
    which shows that $x(t)\in[-1,1]$ on the affine branch. Moreover, for $t\in\sigma(A^*A)$ we have
    $t\geq m(A)^2=t_0$, and $1-\frac{1}{\rho K}\geq0$. Thus, \eqref{eqn:W_affine_branch} shows
    $W(t)\geq0$ on this branch.

    It remains to deal with the second branch of $\mbd$.
    If $t \in \sigma(A^* A)$ satisfies $t> s_\dagger^2$, then we take $x(t)=-1$.
    Write $s=\sqrt{t}$.
    A computation gives
    \begin{equation*}
        W(t)=\frac{1}{(\rho K-1)(1-s)\big((1-s_\dagger)+(s-s_\dagger)\big)}w(s)w(s)^*,
    \end{equation*}
    where
    \begin{equation*}
        w(s)=\begin{bmatrix}
            (K-1)-(\rho K-1)(1-s)\\
            (K-1)-(\rho-1)K(1-s)
        \end{bmatrix}.
    \end{equation*}
    Since $s_\dagger<s<1$, the scalar prefactor is positive, so $W(t)\geq0$ on the second branch as well.
    This proves part (b).
\end{proof}
Let $0<\rho < 1$. Then for any fixed $t$, the function $K\mapsto b_{\rho,K}(t)$ is increasing on $[\rho,1]$. Consequently, the tightest upper bound occurs at $K=\rho$. Specifically, if $K=\sbd(\|A\|^{2})$ as in Proposition \ref{prop:main_result_K}, then $K=\rho$ is equivalent to $\|A\|=0$, i.e.\ $A=0$. Conversely, as $\|A\|$ increases, the value of $K$ increases, resulting in a looser upper bound $b_{\rho,K}(t)$. 

\begin{figure}[H]
\begin{tikzpicture}[declare function={
    func(\x) = (0.5*0.5 - (0.5*0.5-1)*\x);
    refin(\x) = (0.5*0.7 - (0.5*0.7-1)*\x);
    }]
    \begin{axis}[
    axis lines = middle,
    axis on top = true,
    xlabel = {$t$},
    ylabel = {$b_{\rho,K}(t)$},
    domain=0:1,
    samples = 100,
    xmin = 0, xmax = 1.15,
    ymin = 0, ymax = 1.15,
    xtick = {1},
    xticklabels = {1},
    extra x ticks = {0},
    extra x tick labels = {$0$},
    extra x tick style = {grid = none},
    ytick = {1},
    yticklabels = {$1$},
    extra y ticks = {0.25, 0.5*0.7,0.5,0.7},
    extra y tick labels = {$\rho^2$, $\rho K$, $\rho$, $K$},
    extra y tick style = {grid = none},
    grid =  both,
    xmajorgrids = false,
    grid style = {thin, dashed, gray!50},
    thick,
    clip = false,
    legend style={ at={(1, 0.2174)}, anchor=east}
    ]

    \addplot[blue,thick]
    {func(x)};
    \addlegendentry{$K=\rho$}
    \addplot[red, thick]
      {refin(x)};
      \addlegendentry{$K>\rho$}

    \draw[thin, dashed, gray!50] (axis cs:1,0) -- (axis cs:1,1);
    
    \end{axis}
\end{tikzpicture}
\caption{The graphs of $b_{\rho,K}$ for $0<\rho < 1$ and $K=\sbd(\|A\|^{2})$ in the cases $K=\rho$ (i.e.\ $\|A\|=0$) and $K>\rho$ (i.e.\ $\|A\|>0$).}
\end{figure}

Now let $\rho>1$. Then for any fixed $t$, the function $K\mapsto b_{\rho,K}(t)$ is increasing on $[1,\rho]$. The loosest upper bound occurs at $K=\rho$. If $K=\sbd(m(A)^{2})$ as in Proposition \ref{prop:main_result_K}, then $K=\rho$ is equivalent to $m(A)=0$. Conversely, as $m(A)$ increases, the value of $K$ decreases, resulting in a tighter upper bound $b_{\rho,K}(t)$. 

\begin{figure}[H]
\begin{tikzpicture}[declare function={
    func(\x) = (\x <= 0.444) * (4-3*\x) +
              (\x > 0.444) * (4*sqrt(\x) / (3*sqrt(\x)-1)); 
    refin(\x) = (\x <= 0.660) * (2*1.3 - (2*1.3-1)*\x) + (\x > 0.660) * ( 1 + ( (1.3-1)^2 * (1+sqrt(\x)) ) / ( 2*(1.3-1) - (2*1.3-1)*(1-sqrt(\x)) ) );
    }]
    \begin{axis}[
    axis lines = middle,
    axis on top = true,
    xlabel = {$t$},
    ylabel = {$b_{\rho,K}(t)$},
    domain=0:1,
    samples = 100,
    xmin = 0, xmax = 1.15,
    ymin = 0, ymax = 4.60,
    xtick = {1},
    xticklabels = {1},
    extra x ticks = {0, 0.444, 0.660},
    extra x tick labels = {$0$, \null, \null},
    extra x tick style = {grid = none},
    ytick = {1, 1.3, 2},
    yticklabels = {$1$, $K$, $\rho$},
    extra y ticks = {2*1.3, 4},
    extra y tick labels = {$\rho K$, $\rho^2$},
    extra y tick style = {grid = none},
    grid =  both,
    xmajorgrids = false,
    grid style = {thin, dashed, gray!50},
    thick,
    clip = false,
    legend style={ at={(1, 0.8696)}, anchor=east}
    ]
    \addplot[blue, thick]
      {func(x)};
      \addlegendentry{$K=\rho$}
    \addplot[red, thick]
      {refin(x)};
      \addlegendentry{$K<\rho$}
    
\draw[thin, dashed, gray!50] (axis cs:0.444,0) -- 
(axis cs: 0.444, {func(0.444)});
\draw[thin, dashed, gray!50] (axis cs:0.444,0) -- (axis cs:0.444-0.05,-0.5) 
    node[below, black] {$\frac{\rho^2}{(\rho+1)^2}$};

\draw[thin, dashed, gray!50] (axis cs:0.660,0) -- 
(axis cs: 0.660, {refin(0.660)});
\draw[thin, dashed, gray!50] (axis cs:0.660,0) -- (axis cs:0.660+0.05,-0.5) 
    node[below, black] {$\frac{(\rho-1)^2 K^2}{(\rho K-1)^2}$};

\draw[thin, dashed, gray!50] (axis cs:1,0) -- (axis cs:1,2);

\fill [black] (axis cs:0.444, {func(0.444)}) circle (1.5pt);
\fill [black] (axis cs:0.660, {refin(0.660)}) circle (1.5pt);

    \end{axis}
\end{tikzpicture}
\caption{The graphs of $b_{\rho,K}$ for $\rho>1$ and $K=\sbd(m(A)^{2})$ in the cases $K=\rho$ (i.e.\ $m(A)=0$) and $K<\rho$ (i.e.\ $m(A)>0$).}
\end{figure}

\begin{remark}
    We briefly comment on the choices of parameters in the proof of Proposition \ref{prop:main_result_K} in the case $\rho>1$.
    Essentially, we minimized $b(t)$ subject to the constraints given by Lemma \ref{lem:PSD_reduction_full} (i.e.\ there exists a $\delta\geq0$ and a function $x\colon\sigma(A^{*}A)\to[-1,1]$ such that $W(t)\geq0$ for all $t\in\sigma(A^{*}A)$). In particular, we require the upper-left entry of $W(t)$ to be non-negative, which establishes a lower
    bound on $b(t)$ (depending on $\delta$), namely
    \begin{align*}
        b_{*}(t)=t + \delta(1-t).
    \end{align*}
    When can we achieve this minimum?
    For $W(t)$ to remain positive semidefinite, the off-diagonal entries of $W(t)$ must be zero as well, which in turn requires the auxiliary $x(t)$ to take a specific value, namely
    \begin{align*}
       x(t)= -\frac{\rho(\delta-1)}{(\rho-1)\delta}\sqrt{t}. 
    \end{align*}
    Moreover, the lower-right entry of $W(t)$ needs to be non-negative, which means
        \begin{align*}
            \frac{\delta-1}{\delta}t+\frac{\delta}{\rho^{2}}-1\geq0.
        \end{align*}
        We assume $\delta \ge 1$, so this expression increases with respect to $t$. Thus, we may guarantee this constraint for all $t\in\sigma(A^{*}A)$ by equating it to zero at the smallest eigenvalue $t_{0}=m(A)^{2}$. Solving this for $\delta$, we lock in the value $\delta=\rho \sbd(m(A)^{2}) = \rho K \ge 1$.
        Finally, we also need to ensure that $x(t) \in [-1,1]$. 
        With our choice of $\delta \ge 1$, we have $x(t) \le 0$, so the relevant constraint is $x(t) \ge -1$, which is equivalent to $t \le \big( \frac{(\rho-1) K}{\rho K - 1} \big)^2$.
        This fully determines the first branch of $b(t)$.

    On the second branch, we must have $b(t) > b_*(t)$.
    The remaining constraints are therefore $\det(W(t)) \ge 0$ and $x(t) \in [-1,1]$.
    The smallest value of $b(t)$ then turns out to be the solution of $\det(W(t)) = 0$ with the choice $x(t) = -1$.
    This yields the formula for the second branch.

\end{remark}

\section{Proofs of main results}

We first turn to the proof of Theorem \ref{thm:main_result}, which quickly follows by combining Lemma \ref{lem:vanishing_at_zero} and Proposition \ref{prop:main_result_K}.

\begin{proof}[Proof of Theorem \ref{thm:main_result}]
    By considering $rF$ for $0<r<1$ and letting $r\nearrow1$, we may assume that $\|A\|<1$; this reduction also uses continuity of $\sbd$ and continuity of the continuous functional calculus.
    Write $F=M_A\circ G$, where $G\in M_n(\mathcal{O}(\overline{\mathbb{D}}))$ satisfies $\|G\|_\infty\leq1$ and $G(0)=0$.
    By Lemma \ref{lem:vanishing_at_zero}, the operator $G(T)$ belongs to $\superrho$. Thus Proposition \ref{prop:main_result_K}, applied to $G(T)$ and $A$, gives
    \begin{equation*}
        F(T)^*F(T)=M_A(G(T))^*M_A(G(T))\leq \mbd(A^*A),
    \end{equation*}
    as desired.
\end{proof}

We are now ready for the proof of Theorem \ref{thm:main_intro}.
\begin{proof}[Proof of Theorem \ref{thm:main_intro}]
    Let $A=F(0)$. We first consider the case $0 < \rho \le 1$.
    Let $K=\sbd(\|A\|^2)$. By Theorem \ref{thm:main_result},
    \begin{equation*}
        F(T)^*F(T)\leq \mbd(A^*A) \le \mbd(\|A\|^2) I,
    \end{equation*}
    since $\mbd$ is increasing.
    By \eqref{eqn:quadratic_equation_c_rho}, we have
    \begin{equation*}
        \mbd(\|A\|^2)=\rho K-(\rho K-1)\|A\|^2=K^2.
    \end{equation*}
    Hence $\|F(T)\|\leq K=\sbd(\|F(0)\|^2)$.

    Next, let $\rho > 1$ and let $K=\sbd(m(A)^2)$. Again by Theorem \ref{thm:main_result},
    \begin{equation*}
        F(T)^*F(T)\leq \mbd(A^*A) \le \mbd(m(A)^2) I,
    \end{equation*}
    since $\mbd$ is decreasing.
    Moreover, $m(A)^2$ lies in the affine branch of $\mbd$ by Lemma \ref{lem:threshold}.
    The same lemma also gives
    \begin{equation*}
        \mbd(m(A)^2)=\rho K-(\rho K-1)m(A)^2=K^2.
    \end{equation*}
    Hence $\|F(T)\|\leq K=\sbd(m(F(0))^2)$.
\end{proof}

\section{Sharpness}
\label{sec:sharpness}

In this section, we discuss to what extent our results are sharp. We begin with the simple observation that Theorem \ref{thm:main_intro}
is sharp, essentially because the scalar bound of \cite{SV25} is.

\begin{proposition}
  \label{prop:sharpness_main_intro}
  Let $\rho > 0$, $n \in \mathbb{N}$ and $A \in M_n(\mathbb{C})$ with $\|A \| \le 1$. Then there exist $T \in C_\rho$
  and $F \in M_n(\mathcal{O}(\overline{\mathbb{D}}))$ with $\|F\|_\infty = 1$ and $F(0) = A$ such that
  \begin{equation*}
    \|F(T)\| =
    \begin{cases}
      \sbd(\|A\|^2) & \text{if } 0 < \rho \le 1, \\
      \sbd(m(A)^2) & \text{if } \rho > 1.
    \end{cases}
  \end{equation*}
\end{proposition}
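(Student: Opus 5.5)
The plan is to reduce to the scalar sharpness result of \cite{SV25} by diagonalizing $A$ through its singular value decomposition. The scalar result gives, for each $a \in \overline{\mathbb{D}}$, an operator $T_a \in C_\rho$ and a scalar $f_a \in \mathcal{O}(\overline{\mathbb{D}})$ with $\|f_a\|_\infty = 1$, $f_a(0) = a$ and $\|f_a(T_a)\| = \sbd(|a|^2)$. I take this sharpness statement, including the case $\rho<1$ (where $\sbd$ is increasing), as known from \cite{SV25}. If the $\rho<1$ case is not covered there, it has to be checked directly, for instance with weighted shift examples.

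Write $A = U \Sigma V^*$ with $U,V$ unitary and $\Sigma = \diag(\sigma_1,\dots,\sigma_n)$, where $\sigma_1 = \|A\|$ and $\sigma_n = m(A)$. Choose the relevant singular value $\sigma$: take $\sigma = \sigma_1$ if $\rho \le 1$ and $\sigma = \sigma_n$ if $\rho > 1$, and let $j$ be its index. Take $T = T_\sigma$ and $f = f_\sigma$ from the scalar result. Then define
\begin{equation*}
  F(z) = U \diag(\sigma_1,\dots,\sigma_{j-1}, f(z), \sigma_{j+1},\dots,\sigma_n) V^*.
\end{equation*}
This $F$ lies in $M_n(\mathcal{O}(\overline{\mathbb{D}}))$ and satisfies $F(0) = A$. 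Since every $\sigma_i \le 1$ and $\|f\|_\infty = 1$, we also get $\|F\|_\infty = 1$.

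Next, compute $F(T)$. Because $U$ and $V$ are scalar unitaries, conjugation by them does not change norms. So $\|F(T)\|$ is the largest of $\sigma_i \|I\|$ for $i \neq j$ and $\|f(T)\| = \sbd(\sigma^2)$.

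It remains to check that $\sbd(\sigma^2)$ dominates the constant entries $\sigma_i \le 1$. For $\rho \ge 1$ this holds because $\sbd \ge 1$. For $\rho < 1$, $\sbd$ is increasing with $\sbd(t) \ge \sqrt{t}$, since $\sbd(t)^2 \ge t$ by \eqref{eqn:quadratic_equation_c_rho}. Hence $\sbd(\sigma_1^2) \ge \sigma_1 \ge \sigma_i$. This gives the claimed equality $\|F(T)\| = \sbd(\sigma^2)$.

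The main obstacle is the input from \cite{SV25}: confirming that the scalar extremal examples exist for every value $|f(0)| \in [0,1]$ and for all $\rho > 0$. The endpoint $|a| = 1$ is harmless, since a constant $f$ gives $\|f(T)\| = 1 = \sbd(1)$. The other endpoint, $a = 0$ with $\rho<1$, requires $\|f(T)\| = \rho$, and there one may take $f(z) = z$ and $T = \rho I$.
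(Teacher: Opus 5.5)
Your matrix reduction is fine. Placing the scalar extremal $f$ in the slot of the relevant singular value and the constants $\sigma_i$ elsewhere gives $F(0)=A$, $\|F\|_\infty=1$ and $\|F(T)\|=\max\bigl(\max_{i\neq j}\sigma_i,\|f(T)\|\bigr)$. Your check that $\sbd(\sigma^2)\ge\sigma_i$ is also correct.

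The gap is that the actual content of the proposition is never produced. That content is a pair with $T\in C_\rho$, $\|f\|_\infty=1$, $f(0)=s$ and $\|f(T)\|=\sbd(s^2)$. As you suspect, for $\rho<1$ this cannot simply be imported. The paper quotes the scalar bound of \cite{SV25} only for $\rho\ge1$, and it has to extend the underlying estimate to $\rho<1$ itself. So your appeal to ``weighted shift examples'' remains an unverified promise.

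Moreover, the one concrete example you give is wrong: $\rho I\notin C_\rho$ for $\rho\neq1$. Indeed, $\rho I=\rho P_{\mathcal H}U|_{\mathcal H}$ forces $\langle Uh,h\rangle=1$, hence $Uh=h$, for every unit vector $h$. Then $\rho P_{\mathcal H}U^2|_{\mathcal H}=\rho I\neq\rho^2 I=T^2$. Equivalently, operators in $C_\rho$ have spectral radius at most $\rho/(2-\rho)$, which is $<\rho$ when $\rho<1$.

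The missing ingredient is an explicit extremal that works for every $\rho>0$ at once, and this is what the paper provides.
\begin{itemize}
\item The operator $T=\begin{bmatrix}0&\rho\\0&0\end{bmatrix}$ lies in $C_\rho$ for all $\rho>0$: compress the bilateral shift on $\ell^2(\mathbb Z)$ to $\operatorname{span}\{e_0,e_1\}$.
\item Take $\varphi_s(z)=\frac{s+z}{1+sz}$, with $\varphi_1\equiv1$. Since $T^2=0$, you get $\varphi_s(T)=sI+(1-s^2)T=\begin{bmatrix}s&\rho(1-s^2)\\0&s\end{bmatrix}$.
\item The norm of this matrix is $\frac{c}{2}+\sqrt{\frac{c^2}{4}+s^2}$ with $c=\rho(1-s^2)$, which is exactly $\sbd(s^2)$.
\end{itemize}
For $s=0$ this is $f(z)=z$ with $\|T\|=\rho$, which repairs your endpoint.

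With this single $T$ your one-slot construction goes through. The paper instead places $\varphi_{s_i}$ in every diagonal slot, obtaining $\|F(T)\|=\max_i\sbd(s_i^2)$. Monotonicity of $\sbd$ then selects $\|A\|$ or $m(A)$, which avoids the separate comparison $\sbd(\sigma^2)\ge\sigma_i$.
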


\begin{proof}
  Let
  \begin{equation*}
    T =
    \begin{bmatrix}
      0 & \rho \\ 0 & 0
    \end{bmatrix}.
  \end{equation*}
  It is known and easy to check that $T \in C_\rho$. (For instance, a unitary $\rho$-dilation of $T$ is given by
  the bilateral shift on $\ell^2(\mathbb{Z})$, where we identify $\mathbb{C}^2$ with the span of the standard basis vectors $e_0$ and $e_1$.)

  Let $A=U\operatorname{diag}(s_1,\ldots,s_n)V^*$ be a singular value decomposition of $A$,
  where $0\leq s_j\leq1$ for all $j$. As in \cite{SV25}, we consider for $0 \le s < 1$ the Möbius maps
  \begin{equation*}
    \varphi_s(z)=\frac{s+z}{1+sz},
  \end{equation*}
  and let $\varphi_1(z)=1$.
  Define
  \begin{equation*}
    F(z)=U\operatorname{diag}(\varphi_{s_1}(z),\ldots,\varphi_{s_n}(z))V^*.
  \end{equation*}
  Then $\|F\|_\infty=1$ and $F(0)=A$.

  For $0 \le s \le 1$, we have
  \begin{equation*}
    \|\varphi_s(T)\| = \Big\| 
    \begin{bmatrix}
      s & \rho(1-s^2)\\
      0 & s
    \end{bmatrix} \Big\|
    = \sbd(s^2).
  \end{equation*}
  Hence
  \begin{equation*}
    \|F(T)\|=\max_{1\leq j\leq n}\|\varphi_{s_j}(T)\|
    =\max_{1\leq j\leq n}\sbd(s_j^2).
  \end{equation*}
  Since $\sbd$ is increasing for $0<\rho\leq1$ and decreasing for $\rho>1$, this maximum is
  $\sbd(\|A\|^2)$ in the first case and $\sbd(m(A)^2)$ in the second case.
\end{proof}

The question of sharpness of Theorem \ref{thm:main_result} is more subtle, because Hermitian matrices do not form a lattice under the usual order. For instance, there is no smallest diagonal matrix dominating
the matrix $\begin{bmatrix} 1 & 1 \\ 1 & 1 \end{bmatrix}$.
It is therefore not clear if there is an optimal right-hand side in Theorem \ref{thm:main_result}.
Nonetheless, we will show that the right-hand side in Theorem \ref{thm:main_result} is optimal subject to some natural assumptions.

We begin with the following lower bound, which shows in particular that the estimate $F(T)^* F(T) \le \sbd(F(0)^* F(0))^2$, which one might naively conjecture in view of Theorem \ref{thm:main_intro}, is not true in general.

\begin{proposition}
  \label{prop:lower_bound}
  Let $\rho > 0$ and let $h\colon [0,1] \to [0,\infty)$ be a continuous function such that
  $F(T)^* F(T) \le h(F(0)^* F(0))$ for all $F \in M_n(\mathcal O(\overline{\mathbb D}))$ with $\|F\|_\infty \le 1$, all $n \in \mathbb N$ and all $T \in C_\rho$. Then
  \begin{equation*}
    h(t) \ge \rho^2 - (\rho^2 - 1) t \qquad \text{ for all } t \in [0,1].
  \end{equation*}
\end{proposition}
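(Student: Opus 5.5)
The plan is to exhibit, for each fixed $t \in [0,1]$, an explicit pair $(F,T)$ with $n=2$ that forces the bound by testing the operator inequality against a suitable vector. The key idea is to take $F$ to be a \emph{column} function (padded with a zero column), so that $F(T)^*F(T)$ contains the full $T^*T$ rather than a compressed or averaged version of it. The operator $T$ will be the extremal $\rho$-contraction already used in Proposition \ref{prop:sharpness_main_intro}, namely
\begin{equation*}
  T = \begin{bmatrix} 0 & \rho \\ 0 & 0 \end{bmatrix},
\end{equation*}
which was shown there to be of class $C_\rho$.

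Fix $t\in[0,1]$ and define $F \in M_2(\mathcal{O}(\overline{\mathbb{D}}))$ by
\begin{equation*}
  F(z) = \begin{bmatrix} \sqrt{t} & 0 \\ \sqrt{1-t}\, z & 0 \end{bmatrix}.
\end{equation*}
Then
\begin{equation*}
  F(z)^*F(z) = \diag\big(t + (1-t)|z|^2,\, 0\big),
\end{equation*}
so $\|F\|_\infty \le 1$. Moreover $F(0)^*F(0) = \diag(t,0)$, and therefore $h(F(0)^*F(0)) = \diag(h(t),h(0))$, acting on $\mathbb{C}^2\otimes\mathcal{H}$ as $h(t)I_{\mathcal H}\oplus h(0)I_{\mathcal H}$.

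Next we compute $F(T)$ as a $2\times 2$ operator matrix. Its first column is $\begin{bmatrix}\sqrt t\, I \\ \sqrt{1-t}\,T\end{bmatrix}$ and its second column is zero. Hence the $(1,1)$ entry of $F(T)^*F(T)$ is
\begin{equation*}
  tI + (1-t)T^*T,
\end{equation*}
and all other entries vanish. Compressing the hypothesis $F(T)^*F(T)\le h(F(0)^*F(0))$ to the first copy of $\mathcal H$ gives
\begin{equation*}
  tI + (1-t)T^*T \le h(t) I .
\end{equation*}
Evaluating at a unit vector $x\in\mathcal H=\mathbb{C}^2$ with $T^*Tx = \rho^2 x$ (namely the second standard basis vector) yields
\begin{equation*}
  h(t) \ge t + (1-t)\rho^2 = \rho^2 - (\rho^2-1)t ,
\end{equation*}
as claimed.

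There is no serious obstacle. The only points requiring care are (i) recalling that the $2\times 2$ nilpotent $T$ above lies in $C_\rho$, which is already established in the proof of Proposition \ref{prop:sharpness_main_intro}, and (ii) the bookkeeping that identifies the $(1,1)$ block of $F(T)^*F(T)$. Continuity of $h$ is not needed for this argument.
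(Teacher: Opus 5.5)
Your proof is correct and is essentially the paper's argument. The paper uses the same nilpotent $T$ and the same test vector $(e_2,0)$, but builds $F$ as $M_A\circ G$ with $A=\diag(\sqrt{t},0)$ and $G(z)=\begin{bmatrix}0&z\\z&0\end{bmatrix}$; its first column coincides with yours, so $t+(1-t)\|Te_2\|^2$ arises in the same way. Writing $F$ down directly, as you do, is slightly more economical, and it also covers $t=1$ without the continuity step the paper implicitly needs there, since $M_A$ requires $\|A\|<1$.
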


\begin{proof}
  Let $0\leq s<1$ and set
  \begin{equation*}
    T =
    \begin{bmatrix}
      0 & \rho \\ 0 & 0
    \end{bmatrix},
    \qquad
    G(z) =
    \begin{bmatrix}
      0 & z \\ z & 0
    \end{bmatrix},
    \qquad
    A =
    \begin{bmatrix}
      s & 0 \\ 0 & 0
    \end{bmatrix}.
  \end{equation*}
  As remarked in the proof of Proposition \ref{prop:sharpness_main_intro}, we have $T \in C_\rho$.
  Let
  \begin{equation*}
    F=M_A\circ G,
  \end{equation*}
  hence $\|F\|_\infty\leq1$ and $F(0)=A$.

  We identify $F(T)$ as an operator on $\mathbb{C}^2\oplus\mathbb{C}^2$. A direct computation gives
  \begin{equation*}
    F(T)=
    \begin{bmatrix}
      sI & \sqrt{1-s^2}\,T\\
      \sqrt{1-s^2}\,T & 0
    \end{bmatrix}.
  \end{equation*}
  Let $e_1,e_2$ be the standard basis of $\mathbb{C}^2$ and put $\xi=(e_2,0)\in\mathbb{C}^2\oplus\mathbb{C}^2$.
  Then
  \begin{equation*}
    \langle F(T)^*F(T)\xi,\xi\rangle
    =\|F(T)\xi\|^2
    =s^2+(1-s^2)\|Te_2\|^2
    =\rho^2-(\rho^2-1)s^2.
  \end{equation*}
  On the other hand,
  \begin{equation*}
    \langle h(A^*A)\xi,\xi\rangle=h(s^2).
  \end{equation*}
  Now, if $F(T)^*F(T)\leq h(A^*A)$, we must have
  \begin{equation*}
    h(s^2)\geq \rho^2-(\rho^2-1)s^2
  \end{equation*}
  for every $0\leq s<1$, which gives the desired lower bound on $h$.
\end{proof}

If $\rho > 1$, then the lower bound of Proposition \ref{prop:lower_bound} is larger than the affine branch of the upper bound in Theorem \ref{thm:main_result}. This is not a contradiction since the upper bound in Theorem \ref{thm:main_result} also depends on $m(A)$. Indeed, in the worst case when $m(A) = 0$, the lower and the upper bound agree.

We now turn to showing sharpness of Theorem \ref{thm:main_result}
under suitable assumptions. As in Proposition \ref{prop:lower_bound}, we only consider bounds of the form $h(F(0)^* F(0))$ for some continuous function $h$.
Given $\rho < 1$ and $K \in [\rho,1)$, let $t_0 = \frac{K (\rho - K)}{\rho K - 1}$,
so $K = \sbd(t_0)$ by Lemma \ref{lem:threshold} (as $K$ lies in the image of $\sbd$) and consider
\[
\begin{aligned}
  \mathcal F_{\rho,K}
  = \{ h \in C_{\mathbb R}([0,1]):{}&
      F(T)^* F(T) \le h(F(0)^* F(0)) \text{ for all } T \in C_\rho \text{ and all } \\
    & F \in M_n(\mathcal{O}(\overline{\mathbb{D}})) 
      \text{ with } \|F\|_\infty \le 1 \text{ and } \|F(0)\|^2 = t_0 \text{ and all } n \in \mathbb N \}.
\end{aligned}
\]
Similarly, if $\rho > 1$ and $K \in (1,\rho]$,  define $t_0$ as before and consider
\[
\begin{aligned}
  \mathcal F_{\rho,K}
  = \{ h \in C_{\mathbb R}([0,1]):{}&
      F(T)^* F(T) \le h(F(0)^* F(0)) \text{ for all } T \in C_\rho \text{ and all } \\
    &F \in M_n(\mathcal{O}(\overline{\mathbb{D}}))
      \text{ with } \|F\|_\infty \le 1 \text{ and } m(F(0))^2 = t_0 \text{ and all } n \in \mathbb N \}.
\end{aligned}
\]
Theorem \ref{thm:main_result} shows that $b_{\rho,K} \in \mathcal F_{\rho,K}$,
and we think of $\mathcal F_{\rho,K}$ as the class of all functions that can serve as upper bounds in Theorem \ref{thm:main_result} for the parameters $\rho$ and $K$.

Now Proposition \ref{prop:sharpness_main_intro} implies that every $h \in \mathcal F_{\rho,K}$ satisfies 
\begin{equation*}
  h(t_0) \ge \sbd(t_0)^2 = K^2.
\end{equation*}
Moreover, for an upper bound $h \in \mathcal F_{\rho,K}$ to imply the estimate of Theorem \ref{thm:main_intro}, we must have equality. Hence, we consider $h \in \mathcal{F}_{\rho,K}$ such that $h(t_0) = K^2$. Finally, note that for functions $h \in \mathcal{F}_{\rho,K}$, only
the values of $h$ on $[t_0,1]$ (respectively on $[0,t_0])$ are relevant if $\rho > 1$ (respectively if $\rho < 1$).
With these observations in mind, we have the following result, which shows that Theorem \ref{thm:main_result} is optimal given our assumptions.

\begin{theorem}
  \label{thm:sharpness_main_result}
  Let $\rho >0$, $\rho \neq 1$ and $K \in (1,\rho]$ if $\rho > 1$ or $K \in [\rho,1)$ if $\rho < 1$.
  Let $t_0 = \frac{K (\rho - K)}{\rho K - 1}$ and let $\mathcal F_{\rho,K}$ be as above.
  If $h \in \mathcal F_{\rho,K}$ satisfies $h(t_0) = K^2$, then
  \begin{equation*}
    h(t) \ge b_{\rho,K}(t) \qquad \text{ for all } t \in
    \begin{cases}
      [0,t_0] & \text{if } 0 < \rho < 1, \\
      [t_0,1] & \text{if } \rho > 1.
    \end{cases}
  \end{equation*}
  Here, $b_{\rho,K}$ is the function of Theorem \ref{thm:main_result}.
\end{theorem}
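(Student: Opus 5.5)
Fix $t$ in the relevant interval. To show $h(t)\ge b_{\rho,K}(t)$ we will exhibit an admissible example. We take $n=2$ and $F(0)=A=\diag(\sqrt{t_0},\sqrt{t})$, so that $\|A\|^2=t_0$ when $\rho<1$ (since $t\le t_0$) and $m(A)^2=t_0$ when $\rho>1$ (since $t\ge t_0$). Next we choose $T\in C_\rho$, a function $G$ with $\|G\|_\infty\le 1$ and $G(0)=0$, and set $F=M_A\circ G$. Finally we pick a vector $\xi=\alpha e_1\otimes u+\beta e_2\otimes v$ and use
\[
\langle F(T)^*F(T)\xi,\xi\rangle\le \langle h(A^*A)\xi,\xi\rangle=|\alpha|^2\|u\|^2 K^2+|\beta|^2\|v\|^2h(t).
\]
This rearranges to a lower bound on $h(t)$. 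The point is that the known value $h(t_0)=K^2$ lets us use the first coordinate to \emph{boost} the quadratic form through a cross term. This is exactly why the bound need not be the diagonal lower bound $\rho^2-(\rho^2-1)t$ of Proposition \ref{prop:lower_bound}, and why the dual certificate in Lemma \ref{lem:PSD_reduction_full} (with $\delta=\rho K$ and $x(t)$) should be matched by an extremal example.

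The natural approach is to read off the extremal configuration from the proof of Proposition \ref{prop:main_result_K}. There, equality holds when $Q(T)-\delta L_X(T)$ vanishes on the relevant vector. This requires two things: $L_X(T)$ degenerates on that vector, and $[I;S]\xi$ lies in the kernel of $W$. On the affine branch, $W(t)$ has kernel $(1,0)$ and, at $t_0$, also the vector $(0,1)$. On the second branch, the kernel is $w(s)^\perp$ and $x=-1$. So we want $S=U^*G(T)$ with prescribed action $G(T)\eta=\lambda\eta'$ for the relevant ratio $\lambda$ determined by $w(s)$. We also want the coupling between the two diagonal blocks provided by $G$ to be an off-diagonal function such as $G(z)=\begin{bmatrix}0&z\\ z&0\end{bmatrix}$, or more generally $\begin{bmatrix}0&\phi(z)\\ \psi(z)&0\end{bmatrix}$, mixing the $t_0$- and $t$-coordinates. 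For $T$ we take a weighted shift on $\mathbb{C}^N$ or $\ell^2$, generalizing $\begin{bmatrix}0&\rho\\0&0\end{bmatrix}$, whose membership in $C_\rho$ we verify via \eqref{eq:rho_contraction_characterization} or an explicit unitary $\rho$-dilation. I expect to need an auxiliary lemma characterizing when such weighted shifts are in $C_\rho$, because the second branch requires $T^2\ne0$ in order to realize $x=-1$ (a degenerate $L_{-I}$).

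The steps are as follows. (1) Compute $F(T)=M_A(G(T))$ explicitly using the block structure, as in Proposition \ref{prop:lower_bound}. (2) Evaluate the quadratic form on $\xi$ and optimize over $\alpha,\beta$; a $2\times 2$ positivity condition then yields a determinant inequality, linear in $h(t)$, of the form $(h(t)-a)(K^2-c)\ge|d|^2$ with $K^2\ge c$. (3) On the affine branch, check that the resulting bound equals $\rho K-(\rho K-1)t$. For $\rho<1$ this should already come from the two-dimensional $T$ with a suitably scaled off-diagonal $G$, and the same holds for $\rho>1$ and $t\le s_\dagger^2$. (4) For $\rho>1$ and $t>s_\dagger^2$, use the weighted shift with a free weight parameter, optimize it, and recover the formula $1+\frac{(K-1)^2(1+\sqrt t)}{2(K-1)-(\rho K-1)(1-\sqrt t)}$.

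The main obstacle is step (4): finding a $T\in C_\rho$ and a vector that simultaneously saturate $L_{-I}$ and lie in $\ker W(t)$, so that the optimization reproduces the second branch exactly. I would first try a three-dimensional truncated weighted shift with weights $(\rho,\omega)$, where $\omega$ is chosen so that \eqref{eq:rho_contraction_characterization} is tight at $z=-1$. I would then take $\xi$ supported on the last basis vector in both blocks, and solve for the weights by matching the kernel vector $w(s)^\perp$.
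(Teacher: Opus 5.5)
Your overall architecture is the paper's: $A$ diagonal with entries $\sqrt{t_0},\sqrt t$, $G(z)=\begin{bmatrix}0&z\\ z&0\end{bmatrix}$, $F=M_A\circ G$, a truncated weighted shift for $T$, and a compression of $F(T)^*F(T)\le h(A^*A)$ in which $h(t_0)=K^2$ and $K^2-t_0=\rho K(1-t_0)$ produce the cross-term boost. On the affine branch your plan works exactly as in the paper. Take $T=\begin{bmatrix}0&\rho\\0&0\end{bmatrix}$, pair $e_2$ in the $t$-block with $e_1$ in the $t_0$-block, and take the $2\times2$ determinant. When $\rho>1$ this covers all of $[t_0,1]$, not only $t\le s_\dagger^2$, where $s_\dagger=\frac{(\rho-1)K}{\rho K-1}$.

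The gap is step (4), which is the real content of the theorem. Both concrete choices you propose there fail.

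\emph{Your choice of $T$.} For a $3\times 3$ truncated shift with squared weights $x,y$, the determinant of the matrix in \eqref{eq:rho_contraction_characterization} at $z=1$ is $\rho^{-3}\big(\rho^3-\rho(x+y)+(2-\rho)xy\big)$. This is symmetric in $x,y$. With one weight $\rho$ and the other $\omega$, it equals $-\rho^{-2}(\rho-1)^2\omega^2<0$. So $T\notin C_\rho$ unless $\omega=0$. With $T^2=0$ this construction only recovers the affine bound, which is strictly below $b_{\rho,K}$ for $t>s_\dagger^2$.

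\emph{Your choice of vector.} With $G$ as above, the diagonal blocks of $F(T)$ are even in $T$ and the off-diagonal blocks are odd. Hence for a weighted shift $F(T)(e_j,0)\perp F(T)(0,e_j)$. Putting $\xi$ on the same basis vector in both blocks therefore kills exactly the cross term you want.

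\emph{What the paper does instead.} It uses the one-parameter family $x=\rho\frac{1+\rho\lambda}{1+\lambda}$, $y=\frac{\rho}{1+(\rho-1)\lambda}$ on the curve $\rho^3-\rho(x+y)+(2-\rho)xy=0$.
\begin{itemize}
\item Lemma~\ref{lem:weighted_shift_C_rho} shows these lie in $C_\rho$. The determinant factors as $\rho^{-3}(1-r^2)(\rho^3-(2-\rho)r^2xy)$, so every member is tight on the circle. Your tightness instinct is right, but $x=\rho^2$ is the degenerate endpoint $\lambda\to\infty$.
\item It compresses onto $e_2$ in the $t$-block and onto both $e_1,e_3$ in the $t_0$-block, where $h(A^*A)$ becomes $\diag(h(t),K^2,K^2)$.
\item It takes the Schur complement against the invertible $t_0$-part. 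Equivalently, it optimizes your $u$ over $\operatorname{span}(e_1,e_3)$ rather than fixing it. This gives \eqref{eqn:lower_bound}.
\item Finally it chooses $\lambda=(K-1)(1-\sqrt t)/a\ge 0$, with $a=(\rho K-1)\sqrt t-K(\rho-1)>0$, so that the negative term vanishes and $h(t)\ge b_{\rho,K}(t)$.
\end{itemize}
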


\begin{proof}
  We first consider the affine branch of $b_{\rho,K}$.
  Let $t \in [0,t_0]$ if $0 < \rho < 1$ or $t \in [t_0,1]$ if $\rho > 1$.
  By continuity, we may assume that $t < 1$.
  Let
  \begin{equation*}
    T =
    \begin{bmatrix}
      0 & \rho \\ 0 & 0
    \end{bmatrix},
    \qquad
    G(z) =
    \begin{bmatrix}
      0 & z \\ z & 0
    \end{bmatrix},
    \qquad
    A =
    \begin{bmatrix}
      \sqrt{t} & 0 \\ 0 & \sqrt{t_0}
    \end{bmatrix}.
    \qquad
  \end{equation*}
  As before, $T\in C_\rho$. Let $F=M_A\circ G.$ Then $\|F\|_\infty\leq1$ and $F(0)=A$. Moreover, in the case $0<\rho<1$, we have
  $\|A\|^2=t_0$, and in the case $\rho>1$, we have $m(A)^2=t_0$. Thus,
  \begin{equation}
    \label{eq:sharpness_main_result}
    F(T)^* F(T) \le h(A^*A).
  \end{equation}

 A direct computation yields
  \begin{equation*}
    F(T)=
    \begin{bmatrix}
      \sqrt{t}\,I & \sqrt{(1-t)(1-t_0)}\,T\\
      \sqrt{(1-t)(1-t_0)}\,T & \sqrt{t_0}\,I
    \end{bmatrix}.
  \end{equation*}
  Let $e_1,e_2$ denote the standard basis of $\mathbb C^2$.
  The compression of $F(T)^*F(T)$ onto the span of $(e_2,0)$ and $(0,e_1)$ in $\mathbb{C}^2 \oplus \mathbb{C}^2$ is given by
  \begin{equation*}
    \begin{bmatrix}
      t+\rho^2(1-t)(1-t_0) & \rho\sqrt{t_0(1-t)(1-t_0)}\\
      \rho\sqrt{t_0(1-t)(1-t_0)} & t_0
    \end{bmatrix}.
  \end{equation*}
  On the other hand, the corresponding compression of $h(A^*A)$ is
  \begin{equation*}
    \begin{bmatrix}
      h(t) & 0\\
      0 & h(t_0)
    \end{bmatrix}
    =
    \begin{bmatrix}
      h(t) & 0\\
      0 & K^2
    \end{bmatrix}.
  \end{equation*}
  Hence \eqref{eq:sharpness_main_result} implies that
  \begin{equation*}
    \begin{bmatrix}
      h(t)-t-\rho^2(1-t)(1-t_0) & -\rho\sqrt{t_0(1-t)(1-t_0)}\\
      -\rho\sqrt{t_0(1-t)(1-t_0)} & K^2-t_0
    \end{bmatrix}
    \geq0.
  \end{equation*}
  Since $K=\sbd(t_0)$, we have $K^2 - t_0 = \rho K (1 - t_0)$ by \eqref{eqn:quadratic_equation_c_rho}. Substituting this into the matrix and taking its determinant implies
  \begin{equation*}
    \big(h(t)-t-\rho^2(1-t)(1-t_0)\big)\rho K(1-t_0)
    \geq \rho^2t_0(1-t)(1-t_0).
  \end{equation*}
  Thus
  \begin{equation*}
    h(t)
    \geq t+\rho^2(1-t)(1-t_0)+\frac{\rho t_0}{K}(1-t)
     = \rho K-(\rho K-1)t,
  \end{equation*}
  where the equality again follows from the quadratic equation for $K$.
  This establishes the desired lower bound on $h$ in the affine branch
  and completes the proof in the case $\rho < 1$.

  Let $\rho > 1$. We now consider the non-affine branch of $b_{\rho,K}$. 
  Let $s_\dagger = \frac{(\rho - 1) K}{\rho K - 1}$, and assume that $\sqrt{t} > s_\dagger$.
  We use the same $A$, $G$ and $F = M_A \circ G$ as before, but for $T$, we use the $3 \times 3$ truncated weighted shift
  \begin{equation*}
    T =
    \begin{bmatrix}
      0 & \sqrt{x} & 0\\
      0 & 0 & \sqrt{y}\\
      0 & 0 & 0
    \end{bmatrix},
  \end{equation*}
  where
  \begin{equation*}
    x = \rho \frac{1 + \rho \lambda}{1 + \lambda} \qquad \text{ and } \qquad y = \frac{\rho}{1 + (\rho - 1) \lambda}.
  \end{equation*}
  Here, $\lambda \in [0,\infty)$ is a parameter that will be chosen later. Observe that $\rho \le x < \rho^2$
  and $0 < y \le \rho$ and $\rho^3 - \rho(x+y) + (2 - \rho) xy = 0$. Hence $T \in C_\rho$ by Lemma \ref{lem:weighted_shift_C_rho} below.
  (This family interpolates between the case $x= y = {\rho}$ (corresponding to $\lambda = 0$) and the case $x = \rho^2$, $y = 0$
  (corresponding to $\lambda \to \infty$).)
  
  A computation shows that
  \begin{equation*}
    F(T)=
    \begin{bmatrix}
      \sqrt{t}\,I-\sqrt{t_0}(1-t)T^2&
      \sqrt{(1-t)(1-t_0)}\,T\\
      \sqrt{(1-t)(1-t_0)}\,T&
      \sqrt{t_0}\,I-\sqrt{t}(1-t_0)T^2
    \end{bmatrix}.
  \end{equation*}
  The compression of $F(T)^* F(T)$ onto the span of $(e_2,0)$, $(0,e_1)$ and $(0,e_3)$ in $\mathbb{C}^3 \oplus \mathbb{C}^3$ is given by
  \begin{equation*}
    \begin{bmatrix}
      t+(1-t)(1-t_0)x&
      \sqrt{t_0(1-t)(1-t_0)x}&
      \sqrt{t(1-t)(1-t_0)y}\big(1-(1-t_0)x\big)\\
      \sqrt{t_0(1-t)(1-t_0)x}&
      t_0&
      -\sqrt{tt_0}(1-t_0)\sqrt{xy}\\
      \sqrt{t(1-t)(1-t_0)y}\big(1-(1-t_0)x\big)&
      -\sqrt{tt_0}(1-t_0)\sqrt{xy}&
      t_0+(1-t)(1-t_0)y+t(1-t_0)^2xy
    \end{bmatrix}.
  \end{equation*}
  The corresponding compression of $h(A^* A)$ is
  $\diag(h(t),K^2,K^2)$. Thus $F(T)^* F(T) \le h(A^* A)$ implies that
  \begin{equation}
    \label{eqn:sharpness_non_affine}
    \begin{bmatrix}
      h(t)-t-(1-t)(1-t_0)x & -v^*\\
      -v & C
    \end{bmatrix}
    \ge 0,
  \end{equation}
  where
  \begin{equation*}
    v =
    \begin{bmatrix}
      \sqrt{t_0(1-t)(1-t_0)x}\\
      \sqrt{t(1-t)(1-t_0)y}\big(1-(1-t_0)x\big)
    \end{bmatrix}
  \end{equation*}
  and
  \begin{equation*}
    C =
    \begin{bmatrix}
      K^2-t_0 & \sqrt{tt_0}(1-t_0)\sqrt{xy}\\
      \sqrt{tt_0}(1-t_0)\sqrt{xy} &
      K^2-t_0-(1-t)(1-t_0)y-t(1-t_0)^2xy
    \end{bmatrix}.
  \end{equation*}
  We claim that $C$ is invertible. Indeed, using
  $K^2-t_0=\rho K(1-t_0)$ from \eqref{eqn:quadratic_equation_c_rho}, we have
  \begin{equation*}
    C=(1-t_0)
    \begin{bmatrix}
      \rho K & \sqrt{tt_0xy}\\
      \sqrt{tt_0xy} & \rho K-(1-t)y-t(1-t_0)xy
    \end{bmatrix}.
  \end{equation*}
  The determinant of this matrix is
  \begin{align*}
    \det (C)
    &=(1-t_0)^2\big(\rho K\big(\rho K-(1-t)y-t(1-t_0)xy\big)-tt_0xy\big)\\
    &=(1-t_0)^2\big(K^2(\rho^2-txy)-\rho K(1-t)y\big)\\
    &=\frac{(1-t_0)^2\rho^2 K}{(1+\lambda)(1+(\rho-1)\lambda)}
      \big((1-t)\big((K-1)+(\rho K-1)\lambda\big)
        +K(\rho-1)\lambda^2\big)>0.
  \end{align*}
  Hence $C$ is invertible. Taking the Schur complement in \eqref{eqn:sharpness_non_affine} gives
  \begin{equation*}
    h(t) \ge t+(1-t)(1-t_0)x+v^*C^{-1}v.
  \end{equation*}
  Using the formula for $t_0$ from Lemma \ref{lem:threshold} and simplifying
  yields the equivalent lower bound
  \begin{equation}
    \label{eqn:lower_bound}
    h(t) \ge
    1+\frac{(K-1)^2(1+\sqrt{t})}{d}
    -\frac{K(\rho-1)(1+\sqrt{t})(a\lambda-(K-1)(1-\sqrt{t}))^2}{\tau d} ,
  \end{equation}
  where
  \begin{align*}
    a&= (\rho K - 1) \sqrt{t} - K(\rho - 1), \\
    d&=2(K-1)-(\rho K-1)(1-\sqrt{t}),\\
    \tau&=(1-t)\big((K-1)+(\rho K-1)\lambda\big)+K(\rho-1)\lambda^2.
  \end{align*}
  Observe that the assumption $\sqrt{t} > s_\dagger$ yields $a > 0$ and $d > 0$.
  To maximize the right-hand side of \eqref{eqn:lower_bound},
  we choose $\lambda \in [0,\infty)$ so that the last summand of \eqref{eqn:lower_bound} vanishes.
  Then \eqref{eqn:lower_bound} gives $h(t) \ge \mbd(t)$, completing the proof.
\end{proof}

The following lemma was used in the last proof. The special case when $x=y={\rho}$ in the lemma
was previously established by Carrot \cite{Carrot03}.
Closely related results about membership of weighted shifts in $C_\rho$ can
also be found in the work of Eckstein and R\'{a}cz \cite{ER73}.

\begin{lemma}
  \label{lem:weighted_shift_C_rho}
  Let $\rho > 0$ and let $x,y \in [0,\rho^2]$ satisfy
  \begin{equation}
    \label{eq:cubic_equation}
    \rho^3 - \rho(x+y) + (2 - \rho) x y = 0.
  \end{equation}
  Then
  \begin{equation*}
    \begin{bmatrix}
        0&\sqrt{x}&0\\
        0&0&\sqrt{y}\\
        0&0&0
    \end{bmatrix} \in C_\rho.
  \end{equation*}
\end{lemma}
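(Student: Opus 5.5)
Our plan is to verify the intrinsic characterization \eqref{eq:rho_contraction_characterization} directly for the truncated weighted shift $T$ with weights $\sqrt{x},\sqrt{y}$. Write $\alpha = 1-\frac{1}{\rho}$ and $\beta = \frac{2}{\rho}-1$. We must show that
\begin{equation*}
  L(z) = I - \alpha(\overline{z}T + zT^*) - \beta|z|^2 T^*T \ge 0
\end{equation*}
for all $z \in \overline{\mathbb{D}}$.

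\emph{Step 1: reduce to $z=1$.} Conjugating by the diagonal unitary $\operatorname{diag}(1,\omega,\omega^2)$ with $|\omega|=1$ replaces $T$ by $\omega T$. Hence it suffices to treat real $z=r\in[0,1]$. Here $L(r)$ is the real symmetric tridiagonal matrix
\begin{equation*}
  L(r)=\begin{bmatrix} 1 & -\alpha r\sqrt{x} & 0\\ -\alpha r\sqrt{x} & 1-\beta r^2 x & -\alpha r\sqrt{y}\\ 0 & -\alpha r\sqrt{y} & 1-\beta r^2 y\end{bmatrix}.
\end{equation*}
Alternatively, $L(z)$ is affine in the harmonic quantities $z,\bar z$ plus a $|z|^2$ term. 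The cleaner route, used in Lemma \ref{lem:class_superrho}, is to pass to the harmonic function $\Re(I+\frac{2}{\rho}T(I-\bar z' T)^{-1})$. Since $T$ is nilpotent, this is a polynomial in $z$ and $\bar z$ and is harmonic, so positivity on $\overline{\mathbb{D}}$ follows from positivity on the circle $|z|=1$. By the rotation symmetry, it is then enough to check $z=1$.

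\emph{Step 2: check $z=1$.} Concretely, we would check that $T\in C_\rho$ if and only if $\Re\big(I+\tfrac{2}{\rho}(T+T^2)\big)\ge 0$, using $T^3=0$. This is the standard criterion $\Re(I+\frac{2}{\rho}\sum_k \bar z^k T^k)\ge0$ for $z\in\mathbb{T}$, reduced to $z=1$ by the unitary rotation. The resulting matrix is
\begin{equation*}
  N=\begin{bmatrix} 1 & \frac{\sqrt{x}}{\rho} & \frac{\sqrt{xy}}{\rho}\\ \frac{\sqrt{x}}{\rho} & 1 & \frac{\sqrt{y}}{\rho}\\ \frac{\sqrt{xy}}{\rho} & \frac{\sqrt{y}}{\rho} & 1\end{bmatrix}.
\end{equation*}

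\emph{Step 3: positivity of $N$.} We use Sylvester's criterion. The leading $2\times2$ minors are $1-x/\rho^2\ge0$ and $1-y/\rho^2\ge0$ because $x,y\le\rho^2$. The determinant is
\begin{equation*}
  \det N = 1-\frac{x+y+xy}{\rho^2}+\frac{2xy}{\rho^3} = \rho^{-3}\big(\rho^3-\rho(x+y)+(2-\rho)xy\big)=0
\end{equation*}
by \eqref{eq:cubic_equation}. Since all principal minors are nonnegative, including the $(1,3)$ minor $1-xy/\rho^2$, $N\ge0$ follows. For that last minor we need $xy\le\rho^2$. We would derive it from the cubic relation: if, say, $x\ge\rho$, then the relation forces $y\le\rho^2/x$ when $\rho\le2$. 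The case $\rho>2$ needs a separate check, and we would attempt it by solving the cubic for $y$ as a function of $x$.

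The main obstacle is making Step 1 rigorous, namely justifying the equivalence of \eqref{eq:rho_contraction_characterization} with the harmonic criterion, including the strictness issue handled by replacing $T$ with $rT$ as in Lemma \ref{lem:class_superrho}. The minor $1-xy/\rho^2$ for large $\rho$ is a secondary obstacle.
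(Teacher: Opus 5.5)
Your route differs from the paper's, and the reduction is sound. The paper works with \eqref{eq:rho_contraction_characterization} at $z=r\in[0,1)$. There the matrix is positive definite, its determinant factors as $\frac{1-r^2}{\rho^3}\big(\rho^3-(2-\rho)r^2xy\big)$, and leading minors suffice. You instead evaluate the Herglotz-type expression only at $z=1$, where $\det N$ is exactly the cubic and so vanishes. The step you call the main obstacle is routine. Since $T$ is nilpotent, $I-\bar zT$ is invertible for every $z$, and with $L(z)$ as in your Step 1 a direct expansion gives
\[
\Re\Big(I+\tfrac{2}{\rho}\big(\bar zT+\bar z^{2}T^{2}\big)\Big)=\big((I-\bar zT)^{-1}\big)^{*}\,L(z)\,(I-\bar zT)^{-1}
\]
pointwise in $z$. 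Hence $N\ge0$, together with rotation invariance and the minimum principle, yields \eqref{eq:rho_contraction_characterization} on all of $\overline{\mathbb{D}}$. No approximation by $rT$ and no spectral condition is needed.

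The actual gap is Step 3. Since $\det N=0$, you are in the semidefinite case and rightly appeal to all principal minors. But the $(1,3)$ minor then requires $xy\le\rho^2$, which you do not prove: the claim for $\rho\le2$ is unjustified and the case $\rho>2$ is left open. One way to close it is the identity, valid by \eqref{eq:cubic_equation},
\[
(\rho^2-xy)\big(\rho-(2-\rho)x\big)=\rho(x-\rho)^2 .
\]
For $\rho\ge2$ the factor $\rho-(2-\rho)x$ is at least $\rho>0$, so this is the easy case, not the hard one. For $\rho<2$ and $x\le\rho^2$ the factor is at least $\rho-(2-\rho)\rho^2=\rho(\rho-1)^2\ge0$. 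It vanishes only when $\rho=x=1$, and then $xy\le1$ is trivial.

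Alternatively, you can bypass the $(1,3)$ minor. If $x<\rho^2$, the upper-left $2\times2$ block of $N$ is positive definite, and the scalar Schur complement is $\det N/(1-x/\rho^2)=0$, so $N\ge0$. If $x=\rho^2$, the cubic forces $y=0$ or $\rho=1$. In these cases $N$ equals $\begin{bmatrix}1&1&0\\1&1&0\\0&0&1\end{bmatrix}$ or $vv^*+\diag(0,0,1-y)$ with $v=(1,1,\sqrt{y})^T$, and both are positive semidefinite. Handling this singular matrix is the price of working at the boundary point $z=1$; the paper avoids it by working with $r<1$.
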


\begin{proof}
  Let $T$ denote the $3 \times 3$ matrix in the statement. We will check condition \eqref{eq:rho_contraction_characterization} for $T$.
  Since $T$ is a truncated weighted shift, $\omega T$ is unitarily equivalent to $T$ for every $\omega \in \mathbb{C}$ with $|\omega| = 1$.
  Thus, it suffices to check \eqref{eq:rho_contraction_characterization} for $z = r \in [0,1]$. We have
  \[
    I - 2 \Big(1 - \frac{1}{\rho}\Big) r \Re(T) -\Big( \frac{2}{\rho} - 1\Big) r^2 T^* T
     =
    \begin{bmatrix}
     1&-\Big(1-\dfrac{1}{\rho}\Big)r\sqrt{x}&0\\[2mm]
     -\Big(1-\dfrac{1}{\rho}\Big)r\sqrt{x}&
     1-\Big(\dfrac{2}{\rho}-1\Big)r^2x&
     -\Big(1-\dfrac{1}{\rho}\Big)r\sqrt{y}\\[2mm]
     0&-\Big(1-\dfrac{1}{\rho}\Big)r\sqrt{y}&
     1-\Big(\dfrac{2}{\rho}-1\Big)r^2y
     \end{bmatrix},  \]
     and we have to show that this matrix is positive semidefinite for all $r \in [0,1]$.
     We will use Sylvester's criterion to show that the matrix is positive definite for all $r \in [0,1)$, which will imply the result.

     The first two leading principal minors are $1$ and $\frac{\rho^2 - r^2 x}{\rho^2}$,
     which are strictly positive for all $r \in [0,1)$ as $x \le \rho^2$.

     It remains to check the determinant. Using \eqref{eq:cubic_equation} to eliminate the factor of $x+y$, we can rewrite the determinant as
     \begin{equation*}
       \frac{1}{\rho^3} \big(\rho^3 - \rho r^2 (x+y) + (2-\rho) r^4 x y\big)
       = \frac{1-r^2}{\rho^3} \big( \rho^3 - (2-\rho) r^2 x y \big).
     \end{equation*}
     If $\rho \ge 2$, then this expression is clearly positive. If $ 0 < \rho < 2$, then since
     $x y \le \rho^4$,
     \begin{equation*}
       \rho^3 - (2-\rho) r^2 x y > \rho^3 - (2-\rho) \rho^4
       = \rho^3 (\rho - 1)^2 \ge 0. \qedhere
     \end{equation*}
\end{proof}

\bibliographystyle{amsplain}
\bibliography{literature}

\end{document}